\def\ps@pprintTitle{%
 \let\@oddhead\@empty
 \let\@evenhead\@empty
 \def\@oddfoot{}%
 \let\@evenfoot\@oddfoot}
\newtheorem{theorem}{Theorem}
\newtheorem{corollary}{Corollary}
\newtheorem{proposition}{Proposition}
\newtheorem{remark}{Remark}
\newtheorem{example}{Example}
\newtheorem{definition}{Definition}
\newcommand{\R}{\mathbb{R}}
\newcommand{\sub}{\subseteq}
\newcommand{\p}{\textrm{P}}
\newcommand{\E}{\mb E}
\newcommand{\st}{\operatorname{s.t.}}
\newcommand{\rank}{\operatorname{rank}}
\newcommand{\mb}[1]{\mathbf{#1}}
\newcommand{\bs}{\boldsymbol}
\newcommand{\U}{\mathcal{U}}
\newcommand{\bbeta}{\bs\beta}
\newcommand{\bDelta}{\bs\Delta}
\newcommand{\bb}{{\bs\beta}}
\newcommand{\y}{\mb y}
\newcommand{\X}{{\mb X}}
\newcommand{\D}{\bs\Delta}
\newcommand{\hd}{\widehat{\D}}
\newcommand{\xx}{\mb x}
\newcommand{\Y}{\mb Y}
\newcommand{\uu}{\mb u}
\newcommand{\vv}{\mb v}
\newcommand{\ww}{\mb w}
\newcommand{\z}{\mb Z}
\newcommand{\zz}{\mb z}
\newcommand{\ind}{ }
\newcommand{\bd}{\bs\delta}
\newcommand{\ul}{\underline{\lambda}}
\newcommand{\ol}{\overline{\lambda}}
\newcommand{\oh}{\overline{h}}
\newcommand{\argmax}{\operatorname{argmax}}
\newcommand{\argmin}{\operatorname{argmin}}
\newcommand{\ds}{\displaystyle}
\newcommand{\gam}{\bs\gamma}
\newcommand{\sign}{\operatorname{sign}}
\newcommand{\tzz}{\tilde \zz}
\newcommand{\bzz}{\bar \zz}
\begin{document}

\begin{frontmatter}

\title{Characterization of the equivalence of robustification and regularization in linear and matrix regression}
\tnotetext[test]{Copenhaver is partially supported by the Department of Defense, Office of Naval Research, through the National Defense Science and Engineering Graduate Fellowship.}

\author{Dimitris Bertsimas}
\address{Sloan School of Management and Operations Research Center, MIT}
\ead{dbertsim@mit.edu}

\author{Martin S. Copenhaver\corref{}}

\address{Operations Research Center, MIT}
\ead{mcopen@mit.edu}

\begin{abstract}

The notion of developing statistical methods in machine learning which are robust to adversarial perturbations in the underlying data has been the subject of increasing interest in recent years. A common feature of this work is that the adversarial \emph{robustification} often corresponds exactly to regularization methods which appear as a loss function plus a  penalty. In this paper we deepen and extend the understanding of the connection between robustification and regularization (as achieved by penalization) in regression problems. Specifically,
\begin{enumerate}[(a)]
\item In the context of linear regression, we characterize precisely under which conditions on the model of uncertainty used and on the loss function penalties robustification and regularization are equivalent.

\item We extend the characterization of robustification and regularization to matrix regression problems (matrix completion and Principal Component Analysis).
\end{enumerate}

\end{abstract}

\begin{keyword}
Convex programming, Robust optimization, Statistical regression, Regularization, Penalty methods, Adversarial learning
\end{keyword}

\end{frontmatter}


\section{Introduction}

 The development of predictive methods that perform well in the face of uncertainty is at the core of modern machine learning and statistical practice. Indeed, the notion of \emph{regularization}---loosely speaking, a means of controlling the ability of a statistical model to generalize to new settings by trading off with the model's complexity---is at the very heart of such work \cite{hastie}. Corresponding regularized statistical methods, such as the Lasso for linear regression \cite{tibshirani} and nuclear-norm-based approaches to matrix completion \cite{pablo,rechtmc}, are now ubiquitous and have seen widespread success in practice.

In parallel to the development of such regularization methods, it has been shown in the field of robust optimization that under certain conditions these regularized problems result from the need to immunize the statistical problem against adversarial perturbations in the data \cite{ghaoui,xu,RObook,xuChap}. Such a \emph{robustification} offers a different perspective on  regularization methods by identifying which adversarial perturbations the model is protected against. Conversely, this can help to inform statistical modeling decisions by identifying potential choices of regularizers. 
Further, this connection between regularization and robustification offers the potential to use sophisticated data-driven methods in robust optimization \cite{ddunc1,ddunc2} to design regularizers in a principled fashion.

With the continuing growth of the adversarial viewpoint in machine learning (e.g. the advent of new deep learning methodologies such as generative adversarial networks \cite{gan1,gan2,gan3}), it is becoming increasingly important to better understand the connection between robustification and regularization. Our goal in this paper is to shed new light on this relationship by focusing in particular on linear and matrix regression problems. Specifically, our contributions include:

\begin{enumerate}
\item In the context of linear regression we demonstrate that in general such a robustification procedure is not equivalent to regularization (via penalization). We characterize precisely under which conditions on the model of uncertainty used and on the loss function penalties one has that robustification is equivalent to regularization. 

\item We break new ground by considering problems in the matrix setting, such as matrix completion and Principal Component Analysis (PCA). We show that the nuclear norm, a popular penalty function used throughout this setting, arises directly through robustification. As with the case of vector regression, we characterize under which conditions on the model of uncertainty there is equivalence of robustification and regularization in the matrix setting.
\end{enumerate}

The structure of the paper is as follows. In Section 2, we review background on norms and consider robustification and regularization in the context of linear regression, focusing both on their equivalence and non-equivalence. In Section 3, we turn our attention to regression with underlying matrix variables, considering in depth both matrix completion and PCA. In Section 4, we include some concluding remarks.


\section{A robust perspective of linear regression}

\subsection{Norms and their duals}

In this section, we introduce the necessary background on norms which we will use to address the equivalence of robustification and regularization in the context of linear regression. Given a vector space $V\sub\R^n$ we say that $\|\cdot\|:V\to\R$ is a \emph{norm} if for all $\vv,\ww\in V$ and $\alpha\in\R$
\begin{enumerate}
\item If $\|\vv\|=0$, then $\vv=0$,
\item $\|\alpha \vv\|=|\alpha|\|\vv\|$ (absolute homogeneity), and
\item $\|\vv+\ww\|\leq \|\vv\|+\|\ww\|$ (triangle inequality).
\end{enumerate} 
If $\|\cdot\|$ satisfies conditions 2 and 3, but not 1, we call it a \emph{seminorm}. For a norm $\|\cdot\|$ on $\R^n$ we define its dual, denoted $\|\cdot\|_*$, to be
$$\|\bb\|_* := \max_{\xx\in\R^n} \frac{\xx'\bb}{\|\xx\|},$$
where $\xx'$ denotes the transpose of $\xx$ (and therefore $\xx'\bb$ is the usual inner product). For example, the $\ell_p$ norms $\|\bb\|_p := \left(\sum_i |\beta_i|^p\right)^{1/p}$ for $p\in[1,\infty)$ and $\|\bb\|_\infty := \max_i |\beta_i|$ satisfy a well-known duality relation: $\ell_{p^*}$ is dual to $\ell_{p}$, where $p^*\in[1,\infty]$ with $1/p+1/p^*=1$. We call $p^*$ the \emph{conjugate} of $p$. More generally for matrix norms\footnote{We treat a matrix norm as any norm on $\R^{m\times n}$ which satisfies the three conditions of a usual vector norm, although some authors reserve the term ``matrix norm'' for a norm on $\R^{m\times n}$ which also satisfies a submultiplicativity condition (see \cite[pg. 341]{hornjohnson}).} $\|\cdot\|$ on $\R^{m\times n}$ the dual is defined analogously:
$$\|\D\|_* := \max_{\mb A\in\R^{m\times n}}\frac{\langle \mb A,\D\rangle}{\|\mb A\|},$$
where $\D\in\R^{m\times n}$ and $\langle\cdot,\cdot\rangle$ denotes the trace inner product: $\langle \mb A,\D\rangle = \operatorname{Tr}(\mb A'\D)$, where $\mb A'$ denotes the transpose of $\mb A$. We note that the dual of the dual norm is the original norm \cite{boydvandenberghe}.

Three widely used choices for matrix norms (see \cite{hornjohnson}) are Frobenius, spectral, and induced norms. The definitions for these norms are given below for $\D\in\R^{m\times n}$ and summarized in  Table \ref{tab:norms} for convenient reference.

\begin{enumerate}
\item The $p$-Frobenius norm, denoted $\|\cdot\|_{F_p}$, is the entrywise $\ell_p$ norm on the entries of $\D$:
$$\|\D\|_{F_p} := \left(\sum_{ij} |\Delta_{ij}|^p\right)^{1/p}.$$
Analogous to before, $F_{p^*}$ is dual to $F_{p}$, where $1/p+1/p^*=1$.

\item The $p$-spectral (Schatten) norm, denoted $\|\cdot\|_{\sigma_p}$, is the $\ell_p$ norm on the singular values of the matrix $\D$:
$$\|\D\|_{\sigma_p}:= \|\bs\mu(\D)\|_p,$$
where $\bs\mu(\D)$ denotes the vector containing the singular values of $\D$. Again, $\sigma_{p^*}$ is dual to $\sigma_{p}$.

\item Finally we consider the class of induced norms. If $g:\R^{m}\to\R$ and $h:\R^n\to\R$ are norms, then we define the induced norm $\|\cdot\|_{\ind(h,g)}$ as
$$\|\D\|_{\ind(h,g)}:= \max_{\bb\in\R^n} \frac{g(\D\bb)}{h(\bb)}.$$
An important special case occurs when $g=\ell_p$ and $h=\ell_q$. When such norms are used, $\ind(q,p)$ is used as shorthand to denote $\ind(\ell_q,\ell_p)$. Induced norms are sometimes referred to as operator norms. We reserve the term operator norm for the induced norm $\ind(\ell_2,\ell_2) = (2,2) = \sigma_\infty$, which measures the largest singular value.
\end{enumerate}

\begin{table}[h]
\centering
\begin{tabular}{|c|c|c|c|}\hline
Name & Notation & Definition & Description\\\hline
$p$-Frobenius & $F_p$ & $\ds\left(\sum_{ij} |\Delta_{ij}|^p\right)^{1/p}$& entrywise $\ell_p$ norm\\\hline
$p$-spectral (Schatten) & $\sigma_p$ & $\|\bs\mu(\D)\|_p$ &$\ell_p$ norm on the singular values \\\hline
Induced & $(h,g)$ & $\ds\max_{\bb} \frac{g(\D\bb)}{h(\bb)}$ & induced by norms $g,h$\\\hline
\end{tabular}
\caption{Matrix norms on $\D\in\R^{m\times n}$.}
\label{tab:norms}
\end{table}

\subsection{Uncertain regression}

We now turn our attention to uncertain linear regression problems and regularization. The starting point for our discussion is the standard problem
$$\min_{\bb\in\R^n} g(\y-\X\bb),$$
where $\y\in\R^m$ and $\X\in\R^{m\times n}$ are data and $g$ is some convex function, typically a norm. For example, $g=\ell_2$ is least squares, while $g=\ell_1$ is known as least absolute deviation (LAD). In favor of models which mitigate the effects of overfitting these are often replaced by the \emph{regularization} problem
$$\min_{\bb} g(\y-\X\bb)+h(\bb),$$
where $h:\R^n\to\R$ is some penalty function, typically taken to be convex. This approach often aims to address overfitting by penalizing the complexity of the model, measured as $h(\bb)$. (For a more formal treatment using Hilbert space theory, see \cite{bousquet,bauschke}.) For example, taking $g=\ell_2^2$ and $h = \ell_2^2$, we recover the so-called regularized least squares (RLS), also known as ridge regression \cite{hastie}. The choice of $g=\ell_2^2$ and $h=\ell_1$ leads to Lasso, or least absolute shrinkage and selection operator, introduced in \cite{tibshirani}. Lasso is often employed in scenarios where the solution $\bb$ is desired to be sparse, i.e., $\bb$ has very few nonzero entries. Broadly speaking, regularization can take much more general forms; for our purposes, we restrict our attention to regularization that appears in the penalized form above.

In contrast to this approach, one may alternatively wish to re-examine the nominal regression problem $\min_{\bb}g(\y-\X\bb)$ and instead attempt to solve this taking into account adversarial noise in the data matrix $\X$. As in \cite{ghaoui,al1,al2,RObook,xu}, this approach may take the form
\begin{equation}\label{tenet}
\min_{\bb} \max_{\D\in\U} g(\y-(\X+\D)\bb),
\end{equation}
where the set $\U\sub\R^{m\times n}$ characterizes the user's belief about uncertainty on the data matrix $\X$. This set $\U$ is known  in the language of robust optimization \cite{RObook,ROsurvey} as an uncertainty set and the inner maximization problem $\max_{\D\in\U} g(\y-(\X+\D)\bb)$ takes into account the worst-case error (measured via $g$) over $\U$. We call such a procedure \emph{robustification} because it attempts to immunize or robustify the regression problem from structural uncertainty in the data. Such an adversarial or ``worst-case'' procedure is one of the key tenets of the area of robust optimization \cite{RObook,ROsurvey}.

As noted in the introduction, the adversarial perspective offers several attractive features. Let us first focus on settings when robustification coincides with a regularization problem. In such a case, the robustification identifies the adversarial perturbations the model is protected against, which can in turn provide additional insight into the behavior of different regularizers. Further, technical machinery developed for the construction of data-driven uncertainty sets in robust optimization \cite{ddunc1,ddunc2} enables the potential for a principled framework for the design of regularization schemes, in turn addressing a complex modeling decision encountered in practice.

Moreover, the adversarial approach is of interest in its own right, even if robustification does not correspond directly to a regularization problem. This is evidenced in part by the burgeoning success of generative adversarial networks and other methodologies in deep learning \cite{gan1,gan2,gan3}. Further, the worst-case approach often leads to a more straightforward analysis of properties of estimators \cite{xu} as well as algorithms for finding estimators \cite{bta}.

Let us now return to the robustification problem. A natural choice of an uncertainty set which gives rise to interpretability is the set $\U = \{\D\in\R^{m\times n}: \|\D\|\leq \lambda\}$, where $\|\cdot\|$ is some matrix norm and $\lambda>0$. One can then write $\max_{\D\in \U} g(\y-(\X+\D)\bb)$ as
$$\begin{array}{ll}
\ds\max_{\widetilde{\X}}&g(\y- \widetilde\X\bb)\\
\st&\|\X-\widetilde \X\|\leq \lambda,
\end{array}$$
or the worst case error taken over all $\widetilde \X$ sufficiently close to the data matrix $\X$. In what follows, if $\|\cdot\|$ is a norm or seminorm, then we let $\U_{\|\cdot\|}$ denote the ball of radius $\lambda$ in $\|\cdot\|$:
$$\U_{\|\cdot\|} = \{\D:\|\D\|\leq \lambda\}.$$
For example, $\U_{F_p}$, $\U_{\sigma_p}$, and $\U_{\ind(h,g)}$ denote uncertainty sets under the norms $F_p$, $\sigma_p$, and $\ind(h,g)$, respectively. We assume $\lambda>0$ fixed for the remainder of the paper. 

We briefly mention addressing uncertainty in $\y$. Suppose that we have a set $\mathcal{V}\sub\R^m$ which captures some belief about the uncertainty in $\y$. If again we have an uncertainty set $\U\sub\R^{m\times n}$, we may attempt to solve a problem of the form
$$\min_{\bb} \max_{\substack{\bs\delta\in\mathcal{V}\\\D\in\U}} g(\y+\bs\delta - (\X+\D)\bb).$$
We can instead work with a new loss function $\bar g$ defined as
$$\bar g(\vv) := \max_{\bs\delta\in\mathcal{V}} g(\vv + \bs\delta).$$
If $g$ is convex, then so is $\bar g$. In this way, we can work with the problem in the form
$$\min_{\bb} \max_{\D\in\U} \bar g(\y - (\X+\D)\bb),$$
where there is only uncertainty in $\X$. Throughout the remainder of this paper we will only consider such uncertainty.

\subsubsection*{Relation to robust statistics}

There has been extensive work in the robust statistics community on statistical methods which perform well in noisy, real-world environments. As noted in \cite{RObook}, the connection between robust optimization and robust statistics is not clear. We do not put forth any connection here, but briefly describe the development of robust statistics to appropriately contextualize our work. Instead of modeling noise via a distributional perspective, as is often the case in robust statistics, in this paper we choose to model it in a deterministic way using uncertainty sets. For a comprehensive description of the theoretical developments in robust statistics in the last half century, see the texts \cite{huber,rousseeuw} and the surveys \cite{robStatsSurvey,roussSurvey}.

A central aspect of work in robust statistics is the development and use of a more general set of loss functions. (This is in contrast to the robust optimization approach, which generally results in the same nominal loss function with a new penalty; see Section 2.3 below.) For example, while least squares (the $\ell_2$ loss) is known to perform well under Gaussian noise, it does not perform well under other types of noise, such as contaminated Gaussian noise. (Indeed, the Gaussian distribution was defined so that least squares is the optimal method under Gaussian noise \cite{rousseeuw}.) In contrast, a method like LAD regression (the $\ell_1$ loss) generally performs better than least squares with errors in $\y$, but not necessarily errors in the data matrix $\X$.

A more general class of such methods is $M$-estimators as proposed in \cite{huberMest} and since studied extensively 
\cite{huber,robRegBook}. However, $M$-estimators lack desirable \emph{finite sample breakdown} properties; in short, $M$-estimators perform very poorly in recovering the loadings $\bb^*$ under gross errors in the data $(\X,\y)$. To address some of these shortcomings, $GM$-estimators were introduced \cite{mallows,hill,hampel}. Since these, many other estimators have been proposed. One such method is least quantile of squares regression \cite{rousseeuw} which has highly desirable robustness properties. There has been significant interest in new robust statistical methods in recent years with the increasing availability of large quantities of high-dimensional data, which often make reliable outlier detection difficult. For commentary on modern approaches to robust statistics, see \cite{roussSurvey,bradicFanWang,fanFanBarut} and references therein.

\subsubsection*{Relation to error-in-variable models}

Another class of statistical models which are particularly relevant for the work contained herein are error-in-variable models \cite{eivb}. One approach to such a problem takes the form
$$\min_{\bb\in\R^n,\; \D\in\R^{m \times n}}g(\y - (\X+\D)\bb) + P(\D),$$
where $P$ is a penalty function which takes into account the complexity of possible perturbations $\D$ to the data matrix $\X$. A canonical example of such a method is total least squares \cite{tls,tlsoverview}, which can be written for fixed $\tau>0$ as
$$\min_{\bb,\D} \|\y-(\X+\D)\bb\|_2 + \tau\|\D\|_F.$$

An equivalent way of writing such problems is, instead of penalized form, as constrained optimization problems. In particular, the constrained version generically takes the form
\begin{equation}\label{eiv}
\min_{\bb} \min_{\substack{\D:\\ P(\D)\leq \eta}} g(\y-(\X+\D)\bb),
\end{equation}
where $\eta>0$ is fixed. Under the representation in \eqref{eiv}, the comparison with the robust optimization approach in \eqref{tenet} becomes immediate. While the classical error-in-variables approach takes an optimistic view on uncertainty in the data matrix $\X$, and finds loadings $\bb$ on the new ``corrected'' data matrix $\X+\D$, the minimax approach of \eqref{tenet} considers protections against adversarial perturbations in the data which maximally increase the loss.

One of the advantages of the adversarial approach to error-in-variables is that it enables a direct analysis of certain statistical properties, such as asymptotic consistency of estimators (\emph{c.f.} \cite{xu,xuChap}). In contrast, analyzing the consistency of estimators attained by a model such as total least squares is a complex issue \cite{tlsc}.

\subsection{Equivalence of robustification and regularization}

A natural question is when do the procedures of regularization and robustification coincide. This problem was first studied in \cite{ghaoui} in the context of uncertain least squares problems and has been extended to more general settings in \cite{xu,xuChap} and most comprehensively in \cite{RObook}. In this subsection, we present settings in which robustification is equivalent to regularization. When such an equivalence holds, tools from robust optimization can be used to analyze properties of the regularization problem (\emph{c.f.} \cite{xu,xuChap}).

We begin with a general result on robustification under induced seminorm uncertainty sets.

\begin{theorem}\label{thm:basicInducedNorm}
If $g:\R^m\to\R$ is a seminorm which is not identically zero and $h:\R^n\to\R$ is a norm, then for any $\zz\in\R^m$ and $\bb\in\R^n$
$$\max_{\D\in\U_{\ind(h,g)}} g(\zz+\D\bb) = g(\zz)+\lambda h(\bb),$$
where $\U_{\ind(h,g)} = \{\D:\|\D\|_{\ind(h,g)}\leq \lambda\}$.
\end{theorem}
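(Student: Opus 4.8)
The plan is to prove the two inequalities separately, as is standard for establishing an equality between a max-value and a closed-form expression. The direction $\max_{\D\in\U_{\ind(h,g)}} g(\zz+\D\bb) \le g(\zz)+\lambda h(\bb)$ is the easy one. For any feasible $\D$, the definition of the induced (semi)norm gives $g(\D\bb)\le \|\D\|_{\ind(h,g)}\, h(\bb) \le \lambda h(\bb)$; this also covers $\bb=\zero$, where both sides vanish. Combining with the triangle inequality for the seminorm $g$, namely $g(\zz+\D\bb)\le g(\zz)+g(\D\bb)$, and then maximizing over feasible $\D$ yields the upper bound.

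For the reverse inequality I would exhibit a single feasible $\D$ attaining the value $g(\zz)+\lambda h(\bb)$; this simultaneously shows the maximum is attained. I would look for a rank-one $\D=\uu\vv'$, since then $\D\bb=(\vv'\bb)\,\uu$ collapses the problem onto a single direction $\uu\in\R^m$. A direct computation using the absolute homogeneity of $g$ shows $\|\uu\vv'\|_{\ind(h,g)} = g(\uu)\,h_*(\vv)$, where $h_*$ denotes the norm dual to $h$, so feasibility reduces to the scalar condition $g(\uu)\,h_*(\vv)\le\lambda$. To align $\vv$ with $\bb$, I would invoke the duality $h(\bb)=\max_{h_*(\vv)\le 1}\vv'\bb$ to pick $\vv$ with $h_*(\vv)=1$ and $\vv'\bb=h(\bb)$ (for $\bb\neq\zero$; the case $\bb=\zero$ is trivial). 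Then $\D\bb=h(\bb)\,\uu$, and choosing $g(\uu)=\lambda$ makes $\D$ feasible and gives $g(\D\bb)=\lambda h(\bb)$ exactly.

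The crux is then choosing the direction $\uu$ (subject to $g(\uu)=\lambda$) so that the triangle inequality $g(\zz+h(\bb)\uu)\le g(\zz)+g(h(\bb)\uu)$ holds with equality, i.e.\ making $\uu$ ``align'' with $\zz$ under $g$. I expect this to be the main obstacle precisely because $g$ is only a seminorm, so $g(\zz)$ may vanish even when $\zz\neq\zero$, and the usual trick of taking $\uu$ proportional to $\zz$ need not apply. I would handle it by cases. If $g(\zz)>0$, take $\uu=\lambda\zz/g(\zz)$, so that $\zz+\D\bb$ is a nonnegative multiple of $\zz$ and the value $g(\zz)+\lambda h(\bb)$ follows from positive homogeneity. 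If $g(\zz)=0$ (including $\zz=\zero$), I would take any $\uu$ with $g(\uu)=\lambda$ --- such $\uu$ exists because $g$ is not identically zero and $\lambda>0$ --- and use the reverse triangle inequality $g(\zz+h(\bb)\uu)\ge g(h(\bb)\uu)-g(\zz)=\lambda h(\bb)$, which together with the upper bound forces equality. In every case the exhibited $\D$ is feasible and attains $g(\zz)+\lambda h(\bb)$, completing the proof.
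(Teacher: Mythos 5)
Your proposal is correct and takes essentially the same route as the paper's own proof: the same rank-one construction $\D=\uu\vv'$ with $\vv$ chosen by duality of $h$ so that $\vv'\bb=h(\bb)$, the same case split on whether $g(\zz)$ vanishes (taking $\uu$ proportional to $\zz$ when $g(\zz)>0$, and an arbitrary $\uu$ with prescribed $g$-value plus the reverse triangle inequality when $g(\zz)=0$). The only cosmetic difference is that you package feasibility as the identity $\|\uu\vv'\|_{\ind(h,g)}=g(\uu)\,h^*(\vv)$, whereas the paper verifies $g(\hd\xx)\leq\lambda h(\xx)$ by direct computation; these are the same calculation.
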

\begin{proof}
From the triangle inequality $g(\zz+\D\bb)\leq g(\zz)+g(\D\bb)\leq g(\zz)+\lambda h(\bb)$ for any $\D\in\U:=\U_{\ind(h,g)}$. We next show that there exists some $\D\in\U$ so that $g(\zz+\D\bb) = g(\zz)+\lambda h(\bb)$. Let $\vv\in\R^n$ so that $\vv\in\operatorname{argmax}_{h^*(\vv)=1} \vv'\bb$, where $h^*$ is the dual norm of $h$. Note in particular that $\vv'\bb=h(\bb)$ by the definition of the dual norm $h^*$. For now suppose that $g(\zz)\neq0$. Define the rank one matrix $\hd = \frac{\lambda}{g(\zz)}\zz\vv'$. Observe that
$$g(\zz+\hd\bb) = g \left(\zz+\frac{\lambda h(\bb)}{g(\zz)}\zz\right) = \frac{g(\zz)+\lambda h(\bb)}{g(\zz)}g(\zz) =	g(\zz)+\lambda h(\bb).$$
We next show that $\hd\in\U$. Observe that for any $\xx\in\R^n$ that
$$g(\hd\xx) = g\left(\frac{\lambda \vv'\xx}{g(\zz)}\zz\right) = \lambda|\vv'\xx|\leq \lambda h(\xx)h^*(\vv) = \lambda h(\xx),$$
where the final inequality follows by definition of the dual norm. Hence $\hd\in\U$, as desired.

We now consider the case when $g(\zz)=0$. Let $\uu\in\R^m$ so that $g(\uu)=1$ (because $g$ is not identically zero there exists some $\uu$ so that $g(\uu)>0$, and so by homogeneity of $g$ we can take $\uu$ so that $g(\uu)=1$). Let $\vv$ be as before. Now define $\hd = \lambda\uu\vv'$. We observe that
$$g(\zz+\hd\bb) = g(\zz+\lambda \uu\vv'\bb)\leq g(\zz)+\lambda |\vv'\bb|g(\uu) = \lambda h(\bb).$$
Now, by the reverse triangle inequality,
$$g(\zz+\hd\bb) \geq g(\hd\bb) - g(\zz) = g(\hd\bb) = \lambda h(\bb),$$
and therefore $g(\zz+\hd\bb)=\lambda h(\bb) = g(\zz)+\lambda h(\bb)$. The proof that $\hd\in\U$ is identical to the case when $g(\zz)\neq0$. This completes the proof.
\end{proof}

This result implies as a corollary known results on the connection between robustification and regularization as found in \cite{xu,RObook,xuChap} and references therein.
\begin{corollary}[\cite{xu,RObook,xuChap}]\label{corr:basicInducedNorm}
If $p,q\in[1,\infty]$ then
$$\min_{\bb} \max_{\D\in\U_{\ind(q,p)}} \|\y-(\X+\D)\bb\|_p = \min_\bb \|\y-\X\bb\|_p+\lambda \|\bb\|_q.$$
In particular, for $p=q=2$ we recover regularized least squares as a robustification; likewise, for $p=2$ and $q=1$ we recover the Lasso.\footnote{Strictly speaking, we recover \emph{equivalent} problems to regularized least squares and Lasso, respectively. We take the usual convention and overlook this technicality (see \cite{RObook} for a discussion). For completeness, we note that one can work directly with the true $\ell_2^2$ loss function, although at the cost of requiring more complicated uncertainty sets to recover equivalence results.}
\end{corollary}

\begin{theorem}[\cite{xu,RObook,xuChap}]\label{thm:basicLpResults}
One has the following for any $p,q\in[1,\infty]$:
$$\min_\bb \max_{\D\in\U_{F_p}} \|\y-(\X+\D)\bb\|_p = \min_\bb \|\y-\X\bb\|_p+ \lambda \|\bb\|_{p^*},$$
where $p^*$ is the conjugate of $p$. Similarly,
$$\min_\bb \max_{\D\in \U_{\sigma_q}}\|\y-(\X+\D)\bb\|_2 = \min_\bb \|\y-\X\bb\|_2 + \lambda \|\bb\|_2.$$
\end{theorem}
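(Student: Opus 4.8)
The plan is to establish, for each fixed $\bb$, the pointwise identity between the inner maximum and the corresponding regularized objective; taking $\min_\bb$ of both sides then yields the two stated equalities. Writing $\zz = \y - \X\bb$, both claims reduce to computing $\max_{\D\in\U}\|\zz - \D\bb\|_p$ (respectively $\max_{\D\in\U}\|\zz - \D\bb\|_2$), and I would run the same two-sided scheme as in Theorem~\ref{thm:basicInducedNorm}: a triangle-inequality upper bound together with an explicit rank-one maximizer, handling the degenerate cases separately at the end.

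For the $F_p$ case, the upper bound $\|\zz - \D\bb\|_p \leq \|\zz\|_p + \|\D\bb\|_p$ requires controlling $\|\D\bb\|_p$ by $\lambda\|\bb\|_{p^*}$. This is precisely where the result departs from Theorem~\ref{thm:basicInducedNorm}: rather than holding by the definition of the induced norm, it follows from H\"older's inequality applied to each row, $|(\D\bb)_i| \leq \|\D_{i\cdot}\|_p\|\bb\|_{p^*}$, and then summing $p$-th powers to obtain $\|\D\bb\|_p \leq \|\D\|_{F_p}\|\bb\|_{p^*} \leq \lambda\|\bb\|_{p^*}$. For achievability (assuming $\zz\neq\zero$) I would choose $\vv$ with $\|\vv\|_p = 1$ and $\vv'\bb = \|\bb\|_{p^*}$ (dual-norm attainment) and set $\hd = -\tfrac{\lambda}{\|\zz\|_p}\zz\vv'$, so that $\hd\bb = -\tfrac{\lambda\|\bb\|_{p^*}}{\|\zz\|_p}\zz$ and hence $\zz - \hd\bb = \big(1+\tfrac{\lambda\|\bb\|_{p^*}}{\|\zz\|_p}\big)\zz$, giving $\|\zz-\hd\bb\|_p = \|\zz\|_p + \lambda\|\bb\|_{p^*}$. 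The key check is that the separable structure of $\hd$ yields $\|\hd\|_{F_p}^p = \tfrac{\lambda^p}{\|\zz\|_p^p}\big(\sum_i|z_i|^p\big)\big(\sum_j|v_j|^p\big) = \lambda^p$, so $\hd\in\U_{F_p}$.

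For the $\sigma_q$ case with $\ell_2$ loss, the upper bound instead uses that the operator ($\ell_2\to\ell_2$) norm of $\D$ equals its largest singular value: $\|\D\bb\|_2 \leq \|\D\|_{\sigma_\infty}\|\bb\|_2 \leq \|\D\|_{\sigma_q}\|\bb\|_2 \leq \lambda\|\bb\|_2$, the middle step being the elementary comparison $\|\bs\mu(\D)\|_\infty \leq \|\bs\mu(\D)\|_q$. For achievability I would take the rank-one matrix $\hd = -\tfrac{\lambda}{\|\zz\|_2\|\bb\|_2}\zz\bb'$, so that $\hd\bb = -\tfrac{\lambda\|\bb\|_2}{\|\zz\|_2}\zz$ and again $\|\zz - \hd\bb\|_2 = \|\zz\|_2 + \lambda\|\bb\|_2$. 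Since a rank-one matrix $\uu\ww'$ has a single nonzero singular value equal to $\|\uu\|_2\|\ww\|_2$, here that value is exactly $\lambda$, so $\|\hd\|_{\sigma_q} = \lambda$ for every $q$, whence $\hd\in\U_{\sigma_q}$.

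Finally I would dispatch the degenerate cases exactly as in Theorem~\ref{thm:basicInducedNorm}: if $\zz = \zero$, replace $\zz/\|\zz\|$ in the construction by any fixed unit vector $\uu$ (in $\|\cdot\|_p$, respectively $\ell_2$) and invoke the reverse triangle inequality to recover $\max_\D\|\zz-\D\bb\| = \lambda\|\bb\|$; the case $\bb = \zero$ is immediate since then $\D\bb = \zero$ for all $\D$. The only genuine obstacle, relative to Theorem~\ref{thm:basicInducedNorm}, is the upper bound: there the inequality $g(\D\bb)\leq\lambda h(\bb)$ was built into the induced-norm constraint, whereas here it must be supplied externally via H\"older (for $F_p$) or via the spectral comparison $\|\D\|_{\sigma_\infty}\leq\|\D\|_{\sigma_q}$ (for $\sigma_q$), after which one must still verify that the analogous rank-one construction lies in the relevant --- and a priori smaller --- uncertainty ball.
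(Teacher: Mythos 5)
Your proposal is correct and follows exactly the route the paper indicates: the paper omits this proof (deferring to the cited references) but remarks that it parallels Theorem~\ref{thm:basicInducedNorm}, and your argument is precisely that parallel --- a triangle-inequality upper bound, with $g(\D\bb)\leq\lambda h(\bb)$ supplied by H\"older's inequality $\|\D\bb\|_p\leq\|\D\|_{F_p}\|\bb\|_{p^*}$ (respectively the spectral comparison $\|\D\|_{\sigma_\infty}\leq\|\D\|_{\sigma_q}$), matched by an explicit rank-one worst-case matrix whose membership in the uncertainty ball you verify via the separability $\|\uu\vv'\|_{F_p}=\|\uu\|_p\|\vv\|_p$ (respectively the fact that a rank-one matrix has a single nonzero singular value). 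The degenerate-case handling and the pointwise-then-minimize reduction are likewise sound, so there is no gap.
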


Observe that regularized least squares arises again under all uncertainty sets defined by the spectral norms $\sigma_q$ when the loss function is $g=\ell_2$. Now we continue with a remark on how Lasso arises through regularization. See \cite{xu} for comprehensive work on the robustness and sparsity implications of Lasso as interpreted through such a robustification considered in this paper.

\begin{remark}\label{remark:ell1ComesFromEll0}
As per Corollary \ref{corr:basicInducedNorm} it is known that Lasso arises as uncertain $\ell_2$ regression with uncertainty set $\U:=\U_{\ind(1,2)}$ \cite{xu}. As with Theorem \ref{thm:basicInducedNorm}, one might argue that the $\ell_1$ penalizer arises as an artifact of the model of uncertainty. We remark that one can derive the set $\U$ as an induced uncertainty set defined using the ``true'' non-convex penalty $\ell_0$, where $\|\bb\|_0:=|\{i:\beta_i\neq0\}|$. To be precise, for any $p\in[1,\infty]$ and for $\Gamma = \{\bb\in\R^n: \|\bb\|_p\leq 1\}$ we claim that
$$\U' := \left\{ \D: \max_{\bb\in\Gamma} \frac{\|\D\bb\|_2}{\|\bb\|_0}\leq \lambda\right\}$$
satisfies $\U=\U'$. This is summarized, with an additional representation $\U''$ as used in \cite{xu}, in the following proposition.
\begin{proposition}
If $\U=\U_{(1,2)}$, $\U' = \{\D: \|\D\bb\|_2\leq \lambda \|\bb\|_0\;\forall \|\bb\|_p\leq 1\}$ for an arbitrary $p\in[1,\infty]$, and $\U'' = \{\D:\|\D_i\|_2\leq \lambda \;\forall i\}$, where $\D_i$ is the $i$th column of $\D$, then $\U=\U'=\U''$.
\end{proposition}
\begin{proof}
We first show that $\U=\U'$. Because $\|\bb\|_1\leq \|\bb\|_0$ for all $\bb\in\R^n$ with $\|\bb\|_p\leq 1$, we have that $\U\sub\U'$. Now suppose that $\D \in \U'$. Then for any $\bb\in\R^n$, we have that
$$\|\bDelta \bbeta \|_2= \left\|\sum_i \beta_i\bDelta \mb e_i\right\|_2 \leq \sum_i|\beta_i|\left\| \bDelta \mb e_i\right\|_2\leq\sum_i|\beta_i|\lambda = \lambda \|\bbeta\|_1,$$
where $\{\mb e_i\}_{i=1}^n$ is the standard orthonormal basis for $\R^n$. Hence, $\D\in\U$ and therefore $\U'\sub\U$. Combining with the previous direction gives $\U=\U'$.

We now prove that $\U=\U''$. That $\U''\sub \U$ is essentially obvious; $\U\sub \U''$ follows by considering $\bb\in\{\mb e_i\}_{i=1}^n$. This completes the proof.
\end{proof}

\noindent This proposition implies that $\ell_1$ arises from the robustification setting without directly appealing to standard convexity arguments for why $\ell_1$ should be used to replace $\ell_0$ (which use the fact that $\ell_1$ is the so-called convex envelope of $\ell_0$ on $[-1,1]^n$, see e.g. \cite{boydvandenberghe}).
\end{remark}

In light of the above discussion, it is not difficult to show that other Lasso-like methods can also be expressed as an adversarial robustification, supporting the flexibility and versatility of such an approach. One such example is the elastic net \cite{zhel,demol,mosci}, a hybridized version of ridge regression and the Lasso. An equivalent representation of the elastic net is as follows:
$$\min_\bb \|\y-\X\bb\|_2 + \lambda\|\bb\|_1+\mu\|\bb\|_2.$$
As per Theorem \ref{thm:basicLpResults}, this can be written exactly as
$$\min_\bb \max_{\substack{\D,\D':\\\|\D\|_{F_\infty} \leq \lambda\\ \|\D'\|_{F_2}\leq \mu }}\|\y-(\X+\D+\D')\bb\|_2.$$
Under this interpretation, we see that $\lambda$ and $\mu$ directly control the tradeoff between two different types of perturbations: ``feature-wise'' perturbations $\D$ (controlled via $\lambda$ and the $F_\infty$ norm) and ``global'' perturbations $\D'$ (controlled via $\mu$ and the $F_2$ norm).

We conclude this subsection with another example of when robustification is equivalent to regularization for the case of LAD ($\ell_1$) and maximum absolute deviation ($\ell_\infty$) regression under row-wise uncertainty.

\begin{theorem}[\cite{xu}]\label{thm:rowWiseUncertainty}
Fix $q\in[1,\infty]$ and let $\U = \{\D: \|\bd_i\|_q\leq \lambda\;\forall i\}$, where $\bd_i$ is the $i$th row of $\D\in\R^{m\times n}$. Then
$$\min_\bb\max_{\D\in\U} \|\y-(\X+\D)\bb\|_1  = \min_\bb \|\y-\X\bb\|_1 + m\lambda\|\bb\|_{q^*}$$
and
$$\min_\bb\max_{\D\in\U} \|\y-(\X+\D)\bb\|_\infty  = \min_\bb \|\y-\X\bb\|_\infty + \lambda\|\bb\|_{q^*}.$$
\end{theorem}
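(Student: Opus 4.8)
The plan is to exploit the fact that the row-wise uncertainty set $\U$ constrains each row $\bd_i$ independently, so that the inner worst-case problem decouples across the $m$ coordinates of the residual. Writing $r_i = y_i - \X_i\bb$ for the $i$th nominal residual (with $\X_i$ the $i$th row of $\X$), the $i$th coordinate of $\y-(\X+\D)\bb$ is exactly $r_i - \bd_i\bb$. The single computation I would establish first is the per-row worst case: since $\{\bd_i\bb : \|\bd_i\|_q\le\lambda\}$ is, by absolute homogeneity and the definition of the dual norm, precisely the symmetric interval $[-\lambda\|\bb\|_{q^*},\,\lambda\|\bb\|_{q^*}]$, maximizing $|r_i-\bd_i\bb|$ over $\|\bd_i\|_q\le\lambda$ yields $|r_i|+\lambda\|\bb\|_{q^*}$ (choose $\bd_i$ so that $\bd_i\bb$ has sign opposite to $r_i$ while attaining the dual-norm bound).

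For the $\ell_1$ statement, I would note that $\|\y-(\X+\D)\bb\|_1 = \sum_i |r_i-\bd_i\bb|$ and that the constraints $\|\bd_i\|_q\le\lambda$ are separable, so the maximization over $\D\in\U$ distributes over the sum:
$$\max_{\D\in\U}\sum_i |r_i-\bd_i\bb| = \sum_i \max_{\|\bd_i\|_q\le\lambda}|r_i-\bd_i\bb| = \sum_i\big(|r_i|+\lambda\|\bb\|_{q^*}\big) = \|\y-\X\bb\|_1 + m\lambda\|\bb\|_{q^*}.$$
Minimizing both sides over $\bb$ gives the first identity, the factor $m$ arising because each of the $m$ rows independently contributes $\lambda\|\bb\|_{q^*}$.

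For the $\ell_\infty$ statement, the residual norm is $\max_i |r_i-\bd_i\bb|$, and the argument becomes an interchange of the outer $\max_{\D\in\U}$ with the coordinate maximum $\max_i$. The $\le$ direction is immediate from the per-row bound applied to every $i$; for $\ge$, I would take the index $i^*$ attaining $\max_i\big(|r_i|+\lambda\|\bb\|_{q^*}\big)$, place the corresponding optimal row in position $i^*$, and set all other rows to $\zero$ (feasible since $\|\zero\|_q=0\le\lambda$), which certifies the value. Because $\lambda\|\bb\|_{q^*}$ is constant across $i$, one has $\max_i\big(|r_i|+\lambda\|\bb\|_{q^*}\big) = \|\y-\X\bb\|_\infty + \lambda\|\bb\|_{q^*}$; here the factor $m$ disappears since only the single worst row is active in the maximum. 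Minimizing over $\bb$ completes the proof.

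The main obstacle is purely a matter of justifying the two interchanges carefully: both rest entirely on the separability of the constraint across rows, so the only genuine content is verifying that the constraints do not couple the rows and that $\zero$ (and appropriate scalings) are feasible, which the definition of $\U$ guarantees directly.
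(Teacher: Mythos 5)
Your proposal is correct and takes essentially the same approach as the paper: your per-row worst case $\max_{\|\bd_i\|_q\le\lambda}|r_i-\bd_i'\bb| = |r_i|+\lambda\|\bb\|_{q^*}$ is the coordinate-wise form of the paper's triangle-inequality-plus-H\"older bound, and the sign-matched, dual-norm-attaining rows you choose assemble into exactly the worst-case matrix $\D$ the paper exhibits. The only difference is organizational: you decouple the rows first and interchange the maximization with the sum (respectively with the coordinate maximum, where your single-nonzero-row certificate handles the $\ell_\infty$ case the paper leaves as ``similar''), whereas the paper bounds the full norm and then verifies attainment with one explicit construction.
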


\noindent For completeness, we note that the uncertainty set $\U= \{\D:\|\bd_i\|_q\leq \lambda\;\forall i\}$ considered in Theorem \ref{thm:rowWiseUncertainty} is actually an induced uncertainty set, namely, $\U = \U_{(q^*,\infty)}.$

\subsection{Non-equivalence of robustification and regularization}

In contrast to previous work studying robustification for regression, which primarily addresses tractability of solving the new uncertain problem \cite{RObook} or the implications for Lasso \cite{xu}, we instead focus our attention on characterization of the equivalence between robustification and regularization. We begin with a regularization upper bound on robustification problems.

\begin{proposition}\label{prop:alwaysUpperBound}
Let $\U\sub\R^{m\times n}$ be any non-empty, compact set and $g:\R^m\to\R$ a seminorm. Then there exists some seminorm $\oh:\R^n\to\R$ so that for any $\zz\in\R^m$, $\bb\in\R^n$,
$$\max_{\D\in\U} g(\zz+\D\bb) \leq g(\zz) + \oh(\bb),$$
with equality when $\zz=\mb0$.
\end{proposition}

\begin{proof}
Let $\oh:\R^n\to\R$ be defined as
$$\oh(\bb) := \max_{\D\in\U} g(\D\bb).$$
To show that $\oh$ is a seminorm we must show it satisfies absolute homogeneity and the triangle inequality. For any $\bb\in\R^n$ and $\alpha\in\R$,
$$\oh(\alpha \bb) = \max_{\D\in\U} g(\D(\alpha\bb)) = \max_{\D\in\U} |\alpha| g(\D\bb) =|\alpha|\left(\max_{\D\in\U} g(\D\bb)\right) = |\alpha|\oh(\bb),$$
so absolute homogeneity is satisfied. Similarly, if $\bb,\gam\in\R^n$,
\begin{align*}
\oh(\bb+\gam) =\max_{\D\in\U} g(\D(\bb+\gam)) &\leq  \max_{\D\in\U} \left[g(\D\bb)+g(\D\gam)\right]\\
&\leq \left(\max_{\D\in\U} g(\D\bb)\right) + \left(\max_{\D\in\U} g(\D\gam)\right),
\end{align*}
and hence the triangle inequality is satisfied. Therefore, $\oh$ is a seminorm which satisfies the desired properties, completing the proof.
\end{proof}

When equality is attained for all pairs $(\zz,\bb)\in\R^m\times\R^n$, we are in the regime of the previous subsection, and we say that robustification under $\U$ is equivalent to regularization under $\oh$. We now discuss a variety of explicit settings in which regularization only provides upper and lower bounds to the true robustified problem.

Fix $p,q\in[1,\infty]$. Consider the robust $\ell_p$ regression problem
$$\min_\bb \max_{\D\in \U_{F_q}} \|\y-(\X+\D)\bb\|_p,$$
where $\U_{F_q} = \{\D\in\R^{m\times n}: \|\D\|_{F_q}\leq \lambda\}$. In the case when $p=q$ we saw earlier (Theorem \ref{thm:basicLpResults}) that one exactly recovers $\ell_p$ regression with an $\ell_{p^*}$ penalty:
$$\min_\bb \max_{\D\in\U_{F_p}} \|\y-(\X+\D)\bb\|_p = \min_\bb \|\y-\X\bb\|_p+\lambda\|\bb\|_{p^*}.$$
Let us now consider the case when $p\neq q$. We claim that regularization (with $\oh$) is no longer equivalent to robustification (with $\U_{F_q}$) unless $p\in\{1,\infty\}$. Applying Proposition \ref{prop:alwaysUpperBound}, one has for any $\zz\in\R^m$ that
$$\max_{\D\in\U_{F_q}} \|\zz+\D\bb\|_p \leq \|\zz\|_p + \oh(\bb),$$
where $\oh=\max_{\D\in \U_{F_q}} \|\D\bb\|_p$ is a norm (when $p=q$, this is precisely the $\ell_{p^*}$ norm, multiplied by $\lambda$). Here we can compute $\oh$. To do this we first define a discrepancy function as follows:

\begin{definition}
For $a,b\in[1,\infty]$ define the discrepancy function $\delta_m(a,b)$ as
$$\delta_m(a,b):=\max\{\|\uu\|_a:\uu\in\R^m,\;\|\uu\|_b = 1\}.$$
\end{definition}

This discrepancy function is computable and well-known (see e.g. \cite{hornjohnson}):
$$\delta_m(a,b) = \left\{\begin{array}{ll}
m^{ 1/a-1/b},& \text{ if }a \leq b\\
1,& \text{ if }a > b.
\end{array}\right.$$
It satisfies $1\leq \delta_m(a,b)\leq m$ and $\delta_m(a,b)$ is continuous in $a$ and $b$. One has that $\delta_m(a,b)=\delta_m(b,a)=1$ if and only if $a=b$ (so long as $m\geq2$). Using this, we now proceed with the theorem. The proof applies basic tools from real analysis and is contained in Appendix A.

\begin{theorem}\label{thm:discrepLbUb}

\begin{enumerate}[(a)]
\item For any $\zz\in\R^m$ and $\bb\in\R^n$,
\begin{equation}\label{eqn:discUB}
\max_{\D\in\U_{F_q}} \|\zz+\D\bb\|_p \leq \|\zz\|_p + \lambda\delta_m(p,q)\|\bb\|_{q^*}.
\end{equation}

\item When $p\in\{1,\infty\}$, there is equality in \eqref{eqn:discUB} for all $(\zz,\bb)$.

\item When $p\in(1,\infty)$ and $p\neq q$, for any $\bb\neq\mb0$ the set of $\zz\in\R^m$ for which the inequality \eqref{eqn:discUB} holds at equality is a finite union of one-dimensional subspaces (so long as $m\geq2$). Hence, for any $\bb\neq\mb0$ the inequality in \eqref{eqn:discUB} is strict for almost all $\zz$.

\item For $p\in(1,\infty)$, one has for all $\zz\in\R^m$ and $\bb\in\R^n$ that
\begin{equation}\label{eqn:discLB}
\|\zz\|_p + \frac{\lambda}{\delta_m(q,p)}\|\bb\|_{q^*}\leq\max_{\D\in\U_{F_q}} \|\zz+\D\bb\|_p.
\end{equation}

\item For $p\in(1,\infty)$, the lower bound in \eqref{eqn:discLB} is best possible in the sense that the gap can be arbitrarily small, i.e., for any $\bb\in\R^n$,
$$\inf_{\zz}\left(\max_{\D\in\U_{F_q}} \|\zz+\D\bb\|_p-\|\zz\|_p - \frac{\lambda}{\delta_m(q,p)}\|\bb\|_{q^*}\right)=0.$$

\end{enumerate}

\end{theorem}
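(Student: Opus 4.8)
The plan is first to collapse the matrix maximization to a vector one. For $\bb\neq\mb0$ I claim that $\{\D\bb:\D\in\U_{F_q}\}=\{\ww\in\R^m:\|\ww\|_q\leq R\}$ where $R:=\lambda\|\bb\|_{q^*}$. Writing $\D$ in terms of its rows $\bd_i$, one has $\|\D\|_{F_q}^q=\sum_i\|\bd_i\|_q^q$ and $(\D\bb)_i=\bd_i'\bb$; H\"older gives $|\bd_i'\bb|\leq\|\bd_i\|_q\|\bb\|_{q^*}$, and conversely any prescribed $\ww$ is realized by taking each $\bd_i$ a suitable multiple of the H\"older-extremal vector for $\bb$. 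Hence every quantity in the theorem is a property of
$$F(\zz):=\max_{\|\ww\|_q\leq R}\|\zz+\ww\|_p.$$
I will also use throughout the duality identity $\delta_m(p^*,q^*)=\delta_m(q,p)$, which follows from the explicit value $\delta_m(a,b)=m^{(1/a-1/b)_+}$ together with $1/p^*-1/q^*=1/q-1/p$.

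\textbf{Parts (a) and (b).} For (a), the triangle inequality gives $F(\zz)\leq\|\zz\|_p+\max_{\|\ww\|_q\leq R}\|\ww\|_p$, and the last maximum is exactly $R\,\delta_m(p,q)$ by definition of $\delta_m$, yielding \eqref{eqn:discUB}. For (b) I would exhibit a maximizer making the triangle step tight. When $p=\infty$ (so $\delta_m(\infty,q)=1$) concentrate the whole budget on a coordinate $i^*$ with $|z_{i^*}|=\|\zz\|_\infty$, taking $\ww=\sign(z_{i^*})R\mb e_{i^*}$. When $p=1$, take $\ww$ a $\delta_m(1,q)$-extremal vector scaled to $\|\ww\|_q=R$ and flip its signs to agree coordinatewise with those of $\zz$; since this changes neither $\|\ww\|_1$ nor $\|\ww\|_q$, all inequalities become equalities.

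\textbf{Part (c) (the crux).} Equality in \eqref{eqn:discUB} at a point $\zz\neq\mb0$ means the maximizer $\ww^*$ satisfies simultaneously $\|\zz+\ww^*\|_p=\|\zz\|_p+\|\ww^*\|_p$, $\|\ww^*\|_p=\delta_m(p,q)\|\ww^*\|_q$, and $\|\ww^*\|_q=R$. For $1<p<\infty$ the norm $\ell_p$ is strictly convex, so the triangle equality forces $\ww^*=c\zz$ with $c>0$; substituting into the second relation shows $\|\zz\|_p=\delta_m(p,q)\|\zz\|_q$, i.e. $\zz$ maximizes the ratio $\|\cdot\|_p/\|\cdot\|_q$, and conversely any such $\zz$ gives equality via $\ww^*=(R/\|\zz\|_q)\zz$. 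It then remains to classify the maximizers of this ratio: for $p>q$ (where $\delta_m(p,q)=1$) they are exactly the vectors with a single nonzero coordinate, the $m$ lines $\R\mb e_i$; for $p<q$ they are the vectors with all coordinates equal in magnitude, the $2^{m-1}$ lines $\R(\pm1,\dots,\pm1)$. In either case the equality set is a finite union of one-dimensional subspaces, which (since $m\geq2$) has measure zero. I expect this classification step---pinning down exactly the extremal directions of $\|\cdot\|_p/\|\cdot\|_q$ and justifying the strict-convexity rigidity---to be the main obstacle.

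\textbf{Parts (d) and (e).} For (d) I would linearize: picking a subgradient $\mb g$ of $\|\cdot\|_p$ at $\zz$ (so $\|\mb g\|_{p^*}=1$ and $\mb g'\zz=\|\zz\|_p$), convexity gives $\|\zz+\ww\|_p\geq\|\zz\|_p+\mb g'\ww$, and maximizing $\mb g'\ww$ over $\|\ww\|_q\leq R$ yields $F(\zz)\geq\|\zz\|_p+R\|\mb g\|_{q^*}$. Since $\|\mb g\|_{q^*}\geq\|\mb g\|_{p^*}/\delta_m(p^*,q^*)=1/\delta_m(q,p)$, this is \eqref{eqn:discLB}. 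For (e), to show the gap can vanish, fix the direction $\hat\zz$ whose $\ell_p$-gradient $\mb g$ minimizes $\|\cdot\|_{q^*}$ subject to $\|\cdot\|_{p^*}=1$; such $\mb g$ exists and attains $1/\delta_m(q,p)$ because, with $1<p^*<\infty$, the ratio $\|\cdot\|_{p^*}/\|\cdot\|_{q^*}$ attains its maximum $\delta_m(p^*,q^*)$, and the duality map surjects onto the $\ell_{p^*}$ unit sphere. Letting $\zz=t\hat\zz$ with $t\to\infty$, the function $\ww\mapsto\|t\hat\zz+\ww\|_p-t\|\hat\zz\|_p$ converges uniformly on $\{\|\ww\|_q\leq R\}$ to the linear map $\ww\mapsto\mb g'\ww$, so $F(t\hat\zz)-t\|\hat\zz\|_p\to R\|\mb g\|_{q^*}=R/\delta_m(q,p)$. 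Hence the displayed gap tends to $0$, and since it is nonnegative by (d) its infimum is exactly $0$.
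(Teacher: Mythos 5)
Your proposal is correct, and its skeleton parallels the paper's: reduce the matrix maximization to a vector maximization over $\{\ww:\|\ww\|_q\leq\lambda\|\bb\|_{q^*}\}$, prove (a)--(b) via the triangle inequality plus sign-aligned extremal vectors, prove (c) via Minkowski's equality condition forcing the maximizer to be a nonnegative multiple of $\zz$, and prove (e) by sending $\zz$ to infinity along an extremal direction. The differences lie in the implementation, and they are real. First, you establish the reduction directly by row-wise H\"older plus a rank-one realization, whereas the paper invokes Fenchel duality (its Proposition \ref{lemma:convDualityReform}); note your identity $\|\D\|_{F_q}^q=\sum_i\|\bd_i\|_q^q$ fails at $q=\infty$, though the set equality still holds there via $\|\D\|_{F_\infty}=\max_i\|\bd_i\|_\infty$, so this is only a cosmetic repair. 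Second, for (d) the paper constructs an explicit perturbation $\hd=\frac{\lambda}{\|\zz\|_q}\zz\vv'$ and uses $\|\zz\|_q\leq\delta_m(q,p)\|\zz\|_p$, while you use the subgradient inequality $\|\zz+\ww\|_p\geq\|\zz\|_p+\mb g'\ww$ together with $\|\mb g\|_{q^*}\geq 1/\delta_m(p^*,q^*)=1/\delta_m(q,p)$; your route is cleaner, explains the constant as a worst-case gradient-norm ratio, and dovetails with your (e), where the bound closes exactly when $\mb g$ minimizes $\|\cdot\|_{q^*}$ on the $\ell_{p^*}$ sphere. Third, in (e) the paper identifies the extremal direction through the substitution $\tzz=\zz^{p-1}$ and proves the limit by a hands-on $\epsilon/3$ covering argument; you obtain the same direction from surjectivity of the duality map and appeal to uniform convergence of $\ww\mapsto\|t\hat\zz+\ww\|_p-t\|\hat\zz\|_p$ on the compact ball --- do state the justification (these functions are convex and uniformly $1$-Lipschitz in $\ww$, so pointwise convergence upgrades to uniform), since that is precisely what the paper's net argument does by hand. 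Finally, in (c) you go further than the paper: it merely asserts finiteness of $\argmax\delta_m(p,q)$ as ``a geometric property of $\ell_p$ balls,'' while you classify the extremizers explicitly ($\R\mb e_i$ for $p>q$; $\R(\pm1,\dots,\pm1)$ for $p<q$), which are the standard equality cases of the norm-comparison inequalities; so the step you flagged as the main obstacle is routine, and your treatment of (c) is, if anything, more complete than the paper's.
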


Theorem \ref{thm:discrepLbUb} characterizes precisely when robustification under $\U_{F_q}$ is equivalent to regularization for the case of $\ell_p$ regression. In particular, when $p\neq q$ and $p\in(1,\infty)$, the two are \emph{not} equivalent, and one only has that
\begin{align*}
\min_{\bb} \|\y-\X\bb\|_p + \frac{\lambda}{\delta_m(q,p)}\|\bb\|_{q^*}&\leq \min_\bb\max_{\D\in\U_{F_q}} \|\y-(\X+\D)\bb\|_p \\
&\leq \min_\bb \|\y-\X\bb\|_p + \lambda\delta_m(p,q)\|\bb\|_{q^*}.
\end{align*}
Further, we have shown that these upper and lower bounds are the \emph{best possible} (Theorem \ref{thm:discrepLbUb}, parts (c) and (e)). While $\ell_p$ regression with uncertainty set $\U_{F_q}$ for $p\neq q$ and $p\in(1,\infty)$ still has both upper and lower bounds which correspond to regularization (with different regularization parameters $\ol\in\left[{\lambda}/{\delta_m(q,p)},\lambda\delta_m(p,q)\right]$), we emphasize that in this case there is no longer the direct connection between the parameter garnering the magnitude of uncertainty ($\lambda$) and the parameter for regularization ($\ol$).

\begin{example}
As a concrete example, consider the implications of Theorem \ref{thm:discrepLbUb} when $p=2$ and $q=\infty$. We have that
\begin{align*}
\min_{\bb} \|\y-\X\bb\|_2 + \lambda \|\bb\|_1 &\leq \min_{\bb} \max_{\D\in\U_{F_\infty}} \|\y-(\X+\D)\bb\|_p\\
& \leq \min_{\bb} \|\y-\X\bb\|_2 + \sqrt{m}\lambda \|\bb\|_1.
\end{align*}
In this case, robustification is \emph{not} equivalent to regularization. In particular, in the regime where there are many data points (i.e. $m$ is large), the gap appearing between the different problems can be quite large.
\end{example}

Let us remark that in general, lower bounds on $\max_{\D\in\U} g(\zz+\D\bb)$ will depend on the structure of $\U$ and may not exist (except for the trivial lower bound of $g(\zz)$) in some scenarios. However, it is easy to show that if $\U$ is compact and zero is in the interior of $\U$, then there exists some $\ul\in(0,1]$ so that
$$\max_{\D\in\U} g(\zz+\D\bb)\geq g(\zz) + \ul \oh (\bb).$$

Before proceeding with other choices of uncertainty sets, it is important to make a further distinction about the general non-equivalence of robustification and regularization as presented in 
Theorem \ref{thm:discrepLbUb}. In particular, it is simple to construct examples (see Appendix B) which imply the following strong existential result:

\begin{theorem}\label{thm:regPaths}
In a setting when robustification and regularization are not equivalent, it is possible for the two problems to have no common optimal solution. In particular,
$$\bb^*\in\underset{\bb}{\operatorname{argmin} } \ds\max_{\D\in\U} g(\y-(\X+\D)\bb)$$
is not necessarily a solution of
$$\min_{\bb} g(\y-\X\bb) + \widetilde{\lambda}\oh(\bb)$$
for \emph{any} $\widetilde{\lambda}>0$, and vice versa. 
\end{theorem}

In other words, the regularization path (as $\widetilde\lambda\in(0,\infty)$ varies) and the robustification path (as the radius $\lambda\in(0,\infty)$ of $\U$ varies) do not necessarily have \emph{any} point in common. As a result, when robustification and regularization do not coincide, they can induce structurally distinct solutions.

We now proceed to analyze another setting in which robustification is not equivalent to regularization. The setting, in line with Theorem \ref{thm:basicLpResults}, is $\ell_p$ regression under spectral uncertainty sets $\U_{\sigma_q}$. As per Theorem \ref{thm:basicLpResults}, one has that
$$\min_\bb\max_{\D\in\U_{\sigma_q}} \|\y-(\X+\D)\bb\|_2 = \min_\bb \|\y-\X\bb\|_2 + \lambda \|\bb\|_2$$
for any $q\in[1,\infty]$. This result on the ``universality'' of RLS under a variety of uncertainty sets relies on the fact that the $\ell_2$ norm underlies spectral decompositions; namely, one can write any matrix $\X$ as $\sum_i\mu_i\uu_i\vv_i'$, where $\{\mu_i\}_i$ are the singular values of $\X$, $\{\uu_i\}_i$ and $\{\vv_i\}_i$ are the left and right singular vectors of $\X$, respectively, and $\|\uu_i\|_2 = \|\vv_i\|_2=1$ for all $i$.

A natural question is what happens when the loss function $\ell_2$, a modeling choice, is replaced by $\ell_p$, where $p\in[1,\infty]$.  We claim that for $p\notin\{1,2,\infty\}$, robustification under $\U_{\sigma_q}$ is no longer equivalent to regularization. In light of Theorem \ref{thm:discrepLbUb} this is not difficult to prove. We find that the choice of $q\in[1,\infty]$, as before, is inconsequential. We summarize this in the following proposition:

\begin{proposition}\label{prop:nonequivRLSuniversal}
For any $\zz\in\R^m$ and $\bb\in\R^n$,
\begin{equation}\label{eqn:discUB2}
\max_{\D\in\U_{\sigma_q}} \|\zz+\D\bb\|_p \leq \|\zz\|_p + \lambda\delta_m(p,2)\|\bb\|_{2}.
\end{equation}
In particular, if $p\in\{1,2,\infty\}$, there is equality in \eqref{eqn:discUB2} for all $(\zz,\bb)$. If $p\notin\{1,2,\infty\}$, then for any $\bb\neq\mb0$ the inequality in \eqref{eqn:discUB2} is strict for almost all $\zz$ (when $m\geq 2$). Further, for $p\notin\{1,2,\infty\}$ one has the lower bound
$$\|\zz\|_p + \frac{\lambda}{\delta_m(2,p)}\|\bb\|_{2}\leq\max_{\D\in\U_{\sigma_q}} \|\zz+\D\bb\|_p,$$
whose gap is arbitrarily small for all $\bb$.
\end{proposition}

\begin{proof}
This result is Theorem \ref{thm:discrepLbUb} in disguise. This follows by noting that
$$\max_{\D\in\U_{\sigma_q}} \|\zz+\D\bb\|_p = \max_{\D\in\U_{F_2}}\|\zz+\D\bb\|_p$$
and directly applying the preceding results.
\end{proof}

We now consider a third setting for $\ell_p$ regression, this time subject to uncertainty $\U_{\ind(q,r)}$; this is a generalized version of the problems considered in Theorems \ref{thm:basicInducedNorm} and \ref{thm:rowWiseUncertainty}. From Theorem \ref{thm:basicInducedNorm} we know that if $p=r$, then
$$\min_\bb \max_{\D\in\U_{\ind(q,p)}} \|\y-(\X+\D)\bb\|_p = \min_\bb \|\y-\X\bb\|_p + \lambda \|\bb\|_q.$$
Similarly, as per Theorem \ref{thm:rowWiseUncertainty}, when $r=\infty$ and $p\in\{1,\infty\}$,
$$\min_\bb \max_{\D\in\U_{(q,\infty)}} \|\y-(\X+\D)\bb\|_p = \min_\bb \|\y-\X\bb\|_p + \lambda\delta_m(p,\infty)\|\bb\|_q.$$
Given these results, it is natural to inquire what happens for more general choices of induced uncertainty set $\U_{\ind(q,r)}$. As before with Theorem \ref{thm:discrepLbUb}, we have a complete characterization of the equivalence of robustification and regularization for $\ell_p$ regression with uncertainty set $\U_{\ind(q,r)}$:
\begin{proposition}\label{prop:rowWiseUncFullCharac}
For any $\zz\in\R^m$ and $\bb\in\R^n$,
\begin{equation}\label{eqn:discUB3}
\max_{\D\in\U_{\ind(q,r)}} \|\zz+\D\bb\|_p \leq \|\zz\|_p + \lambda\delta_m(p,r)\|\bb\|_{q}.
\end{equation}
In particular, if $p\in\{1,r,\infty\}$, there is equality in \eqref{eqn:discUB2} for all $(\zz,\bb)$. If $p\in(1,\infty)$ and $p\neq r$, then for any $\bb\neq\mb0$ the inequality in \eqref{eqn:discUB3} is strict for almost all $\zz$ (when $m\geq 2$). Further, for $p\in(1,\infty)$ with $p\neq r$ one has the lower bound
$$\|\zz\|_p + \frac{\lambda}{\delta_m(r,p)}\|\bb\|_{q}\leq\max_{\D\in\U_{\ind(q,r)}} \|\zz+\D\bb\|_p,$$
whose gap is arbitrarily small for all $\bb$.
\end{proposition}

\begin{proof}
The proof follows the argument given in the proof of Theorem \ref{thm:discrepLbUb}. Here we simply note that now one uses the fact that
$$\max_{\D\in\U_{\ind(q,r)}} \|\zz+\D\bb\|_p = \max_{\|\uu\|_r \leq \lambda \|\bb\|_q } \|\zz+\uu\|_p.$$
\end{proof}

\noindent We summarize all of the results on linear regression in Table \ref{tab:linRegEquiv}.

\begin{table}[h]
\centering
\begin{tabular}{|c|c|c|c|}\hline
Loss function & Uncertainty set $\U$ & $\oh(\bb)$ & Equivalence if and only if\\\hline\hline
seminorm $g$& $\U_{\ind(h,g)}$ ($h$ norm) & $\lambda h(\bb)$ & always\\\hline
$\ell_p$&$\U_{\sigma_q}$&$\lambda\delta_m(p,2)\|\bb\|_2$ & $p\in\{1,2,\infty\}$\\\hline
$\ell_p$&$\U_{F_q}$&$\lambda \delta_m(p,q)\|\bb\|_{q^*}$& $p\in\{1,q,\infty\}$\\\hline
$\ell_p$&$\U_{\ind(q,r)}$&$\lambda \delta_m(p,r)\|\bb\|_q$& $p\in\{1,r,\infty\}$\\\hline
$\ell_p$&$\{\D:\|\bd_i\|_q\leq \lambda\;\forall i\}$& $\lambda m^{1/p}\|\bb\|_{q^*}$& $p\in\{1,\infty\}$\\\hline
\end{tabular}
\caption{Summary of equivalencies for robustification with uncertainty set $\U$ and regularization with penalty $\oh$, where $\oh$ is as given in Proposition \ref{prop:alwaysUpperBound}. Here by equivalence we mean that for all $\zz\in\R^m$ and $\bb\in\R^n$, $\max_{\D\in\U} g(\zz+\bb) = g(\zz)+\oh(\bb)$, where $g$ is the loss function, i.e., the upper bound $\oh$ is also a lower bound. Here $\delta_m$ is as in Theorem \ref{thm:discrepLbUb}. Throughout $p,q\in[1,\infty]$ and $m\geq2$. Here $\bd_i$ denotes the $i$th row of $\D$.}
\label{tab:linRegEquiv}
\end{table}


\section{On the equivalence of robustification and regularization in matrix estimation problems}

A substantial body of problems at the core of modern developments in statistical estimation involves underlying matrix variables. Two prominent examples which we consider here are matrix completion and Principal Component Analysis (PCA). In both cases we show that a common choice of the regularization problem corresponds exactly to a robustification of the nominal problem subject to uncertainty. In doing so we expand the existing knowledge of robustification for vector regression to a novel and substantial domain. We begin by reviewing these two problem classes before introducing a simple model of uncertainty analogous to the vector model of uncertainty.

\subsection{Problem classes}

In matrix completion problems one is given data $Y_{ij}\in\R$ for $(i,j)\in E\sub \{1,\ldots,m\}\times\{1,\ldots,n\}$. One problem of interest is rank-constrained matrix completion
\begin{equation}\label{eqn:RCMC}
\begin{array}{ll}
\ds\min_\X&\|\Y-\X\|_{\p(F_2)}\\
\st&\rank(\X)\leq k,
\end{array}
\end{equation}
where $\|\cdot\|_{\p(F_2)}$ denotes the projected $2-$Frobenius seminorm, namely,
$$\|\z\|_{\p(F_2)} = \left(\sum_{(i,j)\in E} Z_{ij}^2\right)^{1/2}.$$

Matrix completion problems appear in a wide variety of areas. One well-known application is in the Netflix challenge \cite{netflix}, where one wishes to predict user movie preferences based on a very limited subset of given user ratings. Here rank-constrained models are important in order to obtain parsimonious descriptions of user preferences in terms of a limited number of significant latent factors. The rank-constrained problem \eqref{eqn:RCMC} is typically converted to a regularized form with rank replaced by the nuclear norm $\sigma_1$ (the sum of singular values) to obtain the convex problem
$$\min_{\X} \|\Y-\X\|_{\p(F_2)}+\lambda \|\X\|_{\sigma_1}.$$
In what follows we show that this regularized problem can be written as an uncertain version of a nominal problem $\min_\X \|\Y-\X\|_{\p(F_2)}$.

Similarly to matrix completion, PCA typically takes the form
\begin{equation}\label{eqn:PCA}
\begin{array}{ll}
\ds\min_\X&\|\Y-\X\|\\
\st&\rank(\X)\leq k,
\end{array}
\end{equation}
where $\|\cdot\|$ is either the usual Frobenius norm $F_2=\sigma_2$ or the operator norm $\sigma_\infty$, and $\Y\in\R^{m\times n}$. PCA arises naturally by assuming that $\Y$ is observed as some low-rank matrix $\X$ plus noise: $\Y=\X+\mb E$. The solution to \eqref{eqn:PCA} is well-known to be a truncated singular value decomposition which retains the $k$ largest singular values \cite{eckart}. PCA is popular for a variety of applications where dimension reduction is desired.

A variant of PCA known as robust PCA \cite{candesrpca} operates under the assumption that some entries of $\Y$ may be grossly corrupted. Robust PCA assumes that $\Y=\X+\E$, where $\X$ is low rank and $\E$ is sparse (few nonzero entries). Under this model robust PCA takes the form
\begin{equation}\label{eqn:RobPCA}
\min_{\X} \|\Y-\X\|_{F_1}+ \lambda \|\X\|_{\sigma_1}.
\end{equation}
Here again we can interpret $\|\X\|_{\sigma_1}$ as a surrogate penalty for rank. In the spirit of results from compressed sensing on exact $\ell_1$ recovery, it is shown in \cite{candesrpca} that \eqref{eqn:RobPCA} can exactly recover the true $\X_0$ and $\E_0$ assuming that the rank of $\X_0$ is small, $\E_0$ is sufficiently sparse, and the eigenvectors of $\X_0$ are well-behaved (see technical conditions contained therein). Below we derive explicit expressions for PCA subject to certain types of uncertainty; in doing so we show that robust PCA does not correspond to an adversarially robust version of $\min_{\X} \|\Y-\X\|_{\sigma_\infty}$ or $\min_{\X} \|\Y-\X\|_{F_2}$ for \emph{any} model of additive linear uncertainty. 

Finally let us note that the results we consider here on robust PCA are distinct from considerations in the robust statistics community on robust approaches to PCA. For results and commentary on such methods, see \cite{croux,hubert,salibian,roussSurvey}.

\subsection{Models of uncertainty}

For these two problem classes we now detail a model of uncertainty. Our underlying problem is of the form $\min_\X \|\Y-\X\|$, where $\Y$ is given data (possibly with some unknown entries). As with the vector case, we do not concern ourselves with uncertainty in the observed $\Y$ because modeling uncertainty in $\Y$ simply leads to a different choice of loss function. To be precise, if $\mathcal{V}\sub\R^{m\times n}$ and $g$ is convex loss function then
$$\bar g(\Y-\X):=\max_{\D\in\mathcal{V}} g((\Y+\D) - \X)$$
is a new convex loss function $\bar g$ of $\Y-\X$.

As in the vector case we assume a linear model of uncertainty in the measurement of $\X$:
$$Y_{ij} = X_{ij} + \left(\sum_{\ell k} \Delta_{\ell k}^{(ij)} X_{\ell k}\right) + \epsilon_{ij},$$
where $\D^{(ij)}\in\R^{m\times n}$; alternatively, in inner product notation, $Y_{ij} = X_{ij} + \langle \D^{(ij)},\X\rangle + \epsilon_{ij}$. This linear model is in direct analogy with the model for vector regression taken earlier; now $\bb$ is replaced by $\X$, and again we consider linear perturbations of the unknown regression variable.

This linear model of uncertainty captures a variety of possible forms of uncertainty and accounts for possible interactions among different entries of the matrix $\X$. Note that in matrix notation, the nominal problem becomes, subject to linear uncertainty in $\X$,
$$\min_\X \max_{\D\in\U} \|\Y-\X-\D(\X)\|,$$
where here $\U$ is some collection of linear maps and $\D\in\U$ is defined as $[\D(\X)]_{ij} = \langle \D^{(ij)},\X\rangle $, where again $\D^{(ij)}\in\R^{m\times n}$ (all linear maps can be written in such a form). Note here the direct analogy to the vector case, with the notation $\D(\X)$ chosen for simplicity. (For clarity, note that $\D$ is not itself a matrix, although one could interpret it as a matrix in $\D^{mn\times mn}$, albeit at a notational cost; we avoid this here.)

We now outline some particular choices for uncertainty sets. As with the vector case, one natural set is an induced uncertainty set. Precisely, if $g,h:\R^{m\times n}\to\R$ are functions, then we define an induced uncertainty set
$$\U_{\ind(h,g)}:= \left\{\D:\R^{m\times n}\to\R^{m\times n}\;|\;\D\text{ linear, } g(\D(\X))\leq \lambda h(\X)\;\forall \X\in \R^{m\times n}\right\}.$$
As before, when $g$ and $h$ are both norms, $\U_{\ind(h,g)}$ is precisely a ball of radius $\lambda$ in the induced norm
$$\|\D\|_{\ind(h,g)} = \max_{\X} \frac{g(\D(\X))}{h(\X)}.$$
There are also many other possible choices of uncertainty sets. These include the spectral uncertainty sets
$$\U_{\sigma_p} = \{\D:\R^{m\times n}\to\R^{m\times n}|\D\text{ linear, }\|\D\|_{\sigma_p}\leq \lambda \},$$
where we interpret $\|\D\|_{\sigma_p}$ as the $\sigma_p$ norm of $\D$ in any, and hence all, of its matrix representations. Other uncertainty sets are those such as $\U = \{\D: \D^{(ij)}\in \U^{(ij)}\}$, where $\U^{(ij)}\sub\R^{m\times n}$ are themselves uncertainty sets. These last two models we will not examine in depth here because they are often subsumed by the vector results (note that these two uncertainty sets do not truly involve the matrix structure of $\X$, and can therefore be ``vectorized'', reducing directly to vector results).

\subsection{Basic results on equivalence}

We now continue with some underlying theorems for our models of uncertainty. As a first step, we provide a proposition on the spectral uncertainty sets. As noted above, this result is exactly Theorem \ref{thm:basicLpResults}, and therefore we will not consider such uncertainty sets for the remainder of the paper.
\begin{proposition}
For any $q\in[1,\infty]$ and any $\Y\in\R^{m\times n}$,
$$\min_{\X} \max_{\D\in\U_{\sigma_q}} \|\Y-\X-\D(\X)\|_{F_2} = \min_{\X} \|\Y-\X\|_{F_2} + \lambda \|\X\|_{F_2}.$$
\end{proposition}
For what follows, we restrict our attention to induced uncertainty sets. We begin with an analogous result to Theorem \ref{thm:basicInducedNorm}. The proof is similar and therefore kept concise. Throughout we always assume without loss of generality that if $Y_{ij}$ is not known then $Y_{ij}=0$ (i.e., we set it to some arbitrary value).
\begin{theorem}\label{thm:genMatrix}
If $g:\R^{m\times n} \to \R$ is a seminorm which is not indentically zero and $h:\R^{m\times n}\to\R$ is a norm, then
$$\min_{\X} \max_{\D\in\U_{\ind(h,g)}} g\left(\Y-\X-\D(\X)\right) = \min_{\X} g\left(\Y-\X\right) + \lambda h\left( \X\right).$$
\end{theorem}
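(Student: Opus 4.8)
The plan is to prove the stronger pointwise identity that for every fixed $\X\in\R^{m\times n}$,
$$\max_{\D\in\U_{\ind(h,g)}} g(\Y-\X-\D(\X)) = g(\Y-\X)+\lambda h(\X),$$
and then take the minimum over $\X$ on both sides; since the two expressions agree for each individual $\X$, their minima coincide and the theorem follows at once. This reduces everything to an inner maximization that is the exact matrix analogue of Theorem \ref{thm:basicInducedNorm}, so I would follow that argument closely, abbreviating $\z:=\Y-\X$ throughout (a fixed element of $\R^{m\times n}$).

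For the upper bound I would use the triangle inequality and absolute homogeneity of $g$ to write $g(\z-\D(\X))\leq g(\z)+g(\D(\X))$, and then invoke the defining property of the uncertainty set, namely $g(\D(\mb W))\leq\lambda h(\mb W)$ for all $\mb W$ (equivalently $\|\D\|_{\ind(h,g)}\leq\lambda$), applied with $\mb W=\X$ to get $g(\D(\X))\leq\lambda h(\X)$. This establishes ``$\leq$'' for every admissible $\D$.

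The substantive step is exhibiting an admissible linear map attaining equality. I would first select a norming functional for $h$: a matrix $\VV\in\R^{m\times n}$ with $h^*(\VV)=1$ and $\langle\VV,\X\rangle=h(\X)$, which exists since $h$ is a norm on a finite-dimensional space and the dual of the dual recovers $h$. The key idea is that the rank-one matrix $\zz\vv'$ from Theorem \ref{thm:basicInducedNorm} must be replaced by a \emph{rank-one linear operator}. Assuming first $g(\z)\neq0$, I would define $\hd(\mb W):=-\tfrac{\lambda}{g(\z)}\langle\VV,\mb W\rangle\,\z$, which is linear in $\mb W$. Then $\hd(\X)=-\tfrac{\lambda h(\X)}{g(\z)}\z$, so $\z-\hd(\X)$ is a nonnegative multiple of $\z$ and $g(\z-\hd(\X))=g(\z)+\lambda h(\X)$ exactly. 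Admissibility follows from the generalized Cauchy--Schwarz inequality $|\langle\VV,\mb W\rangle|\leq h^*(\VV)h(\mb W)=h(\mb W)$, giving $g(\hd(\mb W))=\lambda|\langle\VV,\mb W\rangle|\leq\lambda h(\mb W)$, hence $\hd\in\U_{\ind(h,g)}$. The degenerate case $g(\z)=0$ proceeds exactly as in Theorem \ref{thm:basicInducedNorm}: choosing $\UU$ with $g(\UU)=1$ (possible since $g\not\equiv0$) and setting $\hd(\mb W):=\lambda\langle\VV,\mb W\rangle\UU$, the triangle and reverse-triangle inequalities together force $g(\z-\hd(\X))=\lambda h(\X)=g(\z)+\lambda h(\X)$, while the same Cauchy--Schwarz estimate shows $\hd$ is admissible.

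The only genuinely new obstacle relative to the vector case is this operator construction: one must recognize that the correct analogue of the rank-one matrix $\zz\vv'$ is the rank-one linear operator $\mb W\mapsto\langle\VV,\mb W\rangle\z$, built from the fixed direction $\z$ and a norming functional $\VV$ of $h$, and then verify that its induced $(h,g)$-norm is at most $\lambda$. Once this translation is in place, every remaining estimate is a line-by-line transcription of the proof of Theorem \ref{thm:basicInducedNorm}, so I anticipate no further difficulty.
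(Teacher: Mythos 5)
Your proposal is correct and follows essentially the same route as the paper's proof: reduce to the pointwise identity for fixed $\X$, get the upper bound from subadditivity plus the definition of $\U_{\ind(h,g)}$, and attain it with the rank-one linear operator $\mb W\mapsto\langle\q,\mb W\rangle\,\z$ built from a norming functional of $h$ (the paper writes $\hd(\mb Z)=\frac{\lambda\langle\q,\mb Z\rangle}{g(\Y-\X)}(\X-\Y)$, which is exactly your map up to sign bookkeeping), with the same dual-norm admissibility check and the same treatment of the degenerate case $g(\Y-\X)=0$.
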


\noindent This theorem leads to an immediate corollary:
\begin{corollary}\label{cor:matrix}
For any norm $\|\cdot\|:\R^{m\times n}\to\R$ and any $p\in[1,\infty]$
$$\min_{\X} \max_{\D\in\U_{\ind(\sigma_p,\|\cdot\|)}} \|\Y-\X-\D(\X)\| = \min_{\X} \|\Y-\X\| +\lambda\|\X\|_{\sigma_p}.$$
\end{corollary}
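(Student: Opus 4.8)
The plan is to obtain this corollary as a direct instantiation of Theorem \ref{thm:genMatrix}. Concretely, I would take $g = \|\cdot\|$ to be the given matrix norm on $\R^{m\times n}$ and $h = \sigma_p$ to be the $p$-spectral (Schatten) norm. With these choices the left-hand side $\max_{\D\in\U_{\ind(\sigma_p,\|\cdot\|)}}\|\Y-\X-\D(\X)\|$ is precisely the robustified inner problem appearing in Theorem \ref{thm:genMatrix} (recall that in the notation $\ind(h,g)$ the first slot is the input norm $h=\sigma_p$ and the second is the output seminorm $g=\|\cdot\|$), and the conclusion of that theorem reads $\min_\X g(\Y-\X) + \lambda h(\X) = \min_\X \|\Y-\X\| + \lambda\|\X\|_{\sigma_p}$, which is exactly the stated identity.

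The only thing to verify is that the hypotheses of Theorem \ref{thm:genMatrix} are met. First, $g=\|\cdot\|$ is assumed to be a norm, hence in particular a seminorm which is not identically zero, so the hypothesis on $g$ holds trivially. Second, and this is the single point actually requiring attention, one must check that $h=\sigma_p$ is a genuine \emph{norm} (not merely a seminorm), since the proof of Theorem \ref{thm:genMatrix} invokes the dual norm $h^*$ and a maximizer $\q\in\argmax_{h^*(\mb A)=1}\langle\mb A,\X\rangle$. This is a standard fact recorded in Section 2.1: for every $p\in[1,\infty]$ the Schatten norm $\sigma_p = \|\bs\mu(\cdot)\|_p$ is a norm on $\R^{m\times n}$ with dual $\sigma_{p^*}$. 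Once this is noted, Theorem \ref{thm:genMatrix} applies verbatim and yields the claim; there is no further obstacle, as the argument is purely a substitution into the already-proved general theorem.
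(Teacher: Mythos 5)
Your proposal is correct and takes exactly the paper's route: the paper presents this result as an immediate consequence of Theorem \ref{thm:genMatrix}, obtained by the same substitution $g=\|\cdot\|$ (a norm, hence a seminorm that is not identically zero) and $h=\sigma_p$ (a genuine norm with dual $\sigma_{p^*}$). Your explicit verification of the hypotheses, including the slot convention in $\ind(h,g)$, is precisely the (unstated) check the paper relies on.
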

In the two subsections which follow we study the implications of Theorem \ref{thm:genMatrix} for matrix completion and PCA.

\subsection{Robust matrix completion}

We now proceed to apply Theorem \ref{thm:genMatrix} for the case of matrix completion. Note that the projected Frobenius ``norm'' $\p(F_2)$ is a seminorm. Therefore, we arrive at the following corollary:

\begin{corollary}\label{cor:robMC}
For any $p\in[1,\infty]$ one has that
$$\min_{\X} \max_{\D\in\U_{\ind(\sigma_p,\p(F_2))}} \|\Y-\X-\D(\X)\|_{\p(F_2)} = \min_{\X} \|\Y-\X\|_{\p(F_2)} + \lambda \|\X\|_{\sigma_p}.$$
In particular, for $p=1$ one exactly recovers so-called nuclear norm penalized matrix completion:
$$\min_{\X} \|\Y-\X\|_{\p(F_2)}+\lambda \|\X\|_{\sigma_1}.$$
\end{corollary}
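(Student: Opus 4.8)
The plan is to recognize this corollary as an immediate specialization of Theorem \ref{thm:genMatrix}, instantiated with the loss seminorm $g = \|\cdot\|_{\p(F_2)}$ and the penalty norm $h = \|\cdot\|_{\sigma_p}$. Once the hypotheses of that theorem are verified for this particular pair $(g,h)$, the stated identity follows verbatim, and the nuclear-norm case is just the choice $p=1$.

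First I would check that $g = \|\cdot\|_{\p(F_2)}$ meets the requirement on the loss in Theorem \ref{thm:genMatrix}, namely that it is a seminorm which is not identically zero. Absolute homogeneity and the triangle inequality are inherited directly from the ordinary Frobenius norm, since $\|\z\|_{\p(F_2)}$ is simply the Frobenius norm of the restriction of $\z$ to the coordinates indexed by $E$; the lack of definiteness (any matrix supported off $E$ has $\p(F_2)$-seminorm zero) is exactly what distinguishes it as a seminorm rather than a norm. Provided $E \neq \emptyset$ — which is implicit in any nontrivial matrix completion instance — it is not identically zero. Next I would note that $h = \|\cdot\|_{\sigma_p}$ is a genuine norm on $\R^{m\times n}$ for every $p \in [1,\infty]$, being the $\ell_p$ norm applied to the singular-value vector $\bs\mu(\cdot)$.

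With both hypotheses in hand I would apply Theorem \ref{thm:genMatrix} with $\U = \U_{\ind(\sigma_p,\p(F_2))}$, which yields
$$\min_{\X} \max_{\D\in\U_{\ind(\sigma_p,\p(F_2))}} \|\Y-\X-\D(\X)\|_{\p(F_2)} = \min_{\X} \|\Y-\X\|_{\p(F_2)} + \lambda \|\X\|_{\sigma_p}.$$
Finally I would specialize to $p=1$: since $\|\X\|_{\sigma_1}$ is by definition the sum of the singular values of $\X$, i.e.\ the nuclear norm, the right-hand side becomes precisely the nuclear-norm-penalized matrix completion objective, establishing the in-particular claim. There is essentially no substantive obstacle here, as all the content is deferred to Theorem \ref{thm:genMatrix}; the only point requiring any care is confirming that $\p(F_2)$ is a not-identically-zero seminorm, and this is routine once one observes it is the Frobenius norm composed with coordinate projection onto $E$.
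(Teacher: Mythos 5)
Your proposal is correct and matches the paper's own derivation exactly: the paper likewise obtains this corollary as an immediate specialization of Theorem \ref{thm:genMatrix}, noting only that $\|\cdot\|_{\p(F_2)}$ is a (not identically zero) seminorm and $\|\cdot\|_{\sigma_p}$ is a norm. Your extra remark that $E\neq\emptyset$ is needed for non-triviality is a small but sound point of care that the paper leaves implicit.
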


It is not difficult to show by modifying the proof of Theorem \ref{thm:genMatrix} that even though $\U_{\ind(\sigma_p,F_2)} \subsetneq \U_{\ind(\sigma_p,\p(F_2) )}$, the following holds:
\begin{proposition}\label{prop:robMC}
For any $p\in[1,\infty]$ one has that
$$\min_{\X} \max_{\D\in\U_{\ind(\sigma_p,F_2)}} \|\Y-\X-\D(\X)\|_{\p(F_2)} = \min_{\X} \|\Y-\X\|_{\p(F_2)} + \lambda \|\X\|_{\sigma_p}.$$
In particular, for $p=1$ one exactly recovers nuclear norm penalized matrix completion.
\end{proposition}

Let us briefly comment on the appearance of the nuclear norm in Corollary \ref{cor:robMC} and Proposition \ref{prop:robMC}. In light of Remark \ref{remark:ell1ComesFromEll0}, it is not surprising that such a penalty can be derived by working directly with the rank function (nuclear norm is the convex envelope of the rank function on the ball $\{\X:\|\X\|_{\sigma_\infty}\leq 1\}$, which is why the nuclear norm is typically used to replace rank \cite{fazelThesis,pablo}). We detail this argument as before. For any $p\in[1,\infty]$ and $\Gamma = \{\X\in\R^{m\times n}: \|\X\|_{\sigma_p}\leq 1\}$, one can show that
\begin{equation}\label{eqn:uncRepRank}
\U_{\ind(\sigma_1,\p(F_2))} = \left\{ \D\text{ linear}: \max_{\X \in \Gamma} \frac{\|\D(\X)\|_{\p(F_2)}}{\rank(\X)}\leq \lambda\right\}.
\end{equation}
Therefore, similar to the vector case with an underlying $\ell_0$ penalty which becomes a Lasso $\ell_1$ penalty, rank leads to the nuclear norm from the robustification setting without directly invoking convexity.

\subsection{Robust PCA}

We now turn our attention to the implications of Theorem \ref{thm:genMatrix} for PCA. We begin by noting robust analogues of $\min_{\X} \|\Y-\X\|$ under the $F_2$ and $\sigma_\infty$ norms. This is distinct from the considerations in \cite{xuChap} on robustness of PCA with respect to training and testing sets.
\begin{corollary}\label{cor:PCA}
For any $p\in[1,\infty]$ one has that
$$\min_{\X} \max_{\D\in\U_{\ind( \sigma_p,F_2)}} \|\Y-\X-\D(\X)\|_{F_2} = \min_{\X} \|\Y-\X\|_{F_2} + \lambda \|\X\|_{\sigma_p}$$
and
$$\min_{\X} \max_{\D\in\U_{\ind(\sigma_p,\sigma_\infty)}} \|\Y-\X-\D(\X)\|_{\sigma_\infty} = \min_{\X} \|\Y-\X\|_{\sigma_\infty} + \lambda \|\X\|_{\sigma_p}.$$
\end{corollary}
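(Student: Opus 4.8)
The plan is to recognize both identities as immediate specializations of Theorem~\ref{thm:genMatrix} (equivalently, of Corollary~\ref{cor:matrix}), obtained by keeping the penalty norm $h = \sigma_p$ fixed and varying only the loss seminorm $g$. Recall that Theorem~\ref{thm:genMatrix} asserts, for any nonzero seminorm $g$ and any norm $h$ on $\R^{m\times n}$, that the robustified value under $\U_{\ind(h,g)}$ equals $\min_\X g(\Y-\X) + \lambda h(\X)$. For the first identity I would take $g = F_2$ and $h = \sigma_p$; the associated induced uncertainty set $\U_{\ind(\sigma_p,F_2)}$ is exactly $\U(\sigma_p,F_2)$, and the theorem's right-hand side becomes $\min_\X \|\Y-\X\|_{F_2} + \lambda\|\X\|_{\sigma_p}$. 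For the second identity I would instead take $g = \sigma_\infty$ with the same $h = \sigma_p$, so that $\U_{\ind(\sigma_p,\sigma_\infty)} = \U(\sigma_p,\sigma_\infty)$ and the right-hand side becomes $\min_\X \|\Y-\X\|_{\sigma_\infty} + \lambda\|\X\|_{\sigma_p}$.

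The only content beyond this identification is verifying the hypotheses of Theorem~\ref{thm:genMatrix} in each case, namely that the chosen loss function is a seminorm that is not identically zero and that $\sigma_p$ is a genuine norm. Both $F_2$ and $\sigma_\infty$ are norms on $\R^{m\times n}$ (and therefore nonzero seminorms), and for every $p\in[1,\infty]$ the Schatten norm $\sigma_p$ is a norm; these are standard facts about matrix norms (see \cite{hornjohnson}). With these observations in place, Theorem~\ref{thm:genMatrix} applies verbatim and yields both equalities.

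I expect no genuine obstacle here: the corollary is essentially a repackaging of Theorem~\ref{thm:genMatrix}, whose substantive step—exhibiting a worst-case rank-one perturbation $\hd$ that attains the triangle-inequality upper bound—has already been carried out. The only point deserving a moment's attention is that $\sigma_\infty$, the operator norm, be treated simply as one more matrix norm eligible as the loss $g$; since Theorem~\ref{thm:genMatrix} is stated for \emph{arbitrary} nonzero seminorm losses, this presents no difficulty.
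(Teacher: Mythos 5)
Your proposal is correct and matches the paper's own treatment: the paper presents Corollary~\ref{cor:PCA} as an immediate specialization of Theorem~\ref{thm:genMatrix} (via Corollary~\ref{cor:matrix}), with the loss $g$ taken to be $F_2$ and $\sigma_\infty$ respectively and the penalty $h=\sigma_p$ in both cases. Your added verification that $F_2$ and $\sigma_\infty$ are nonzero seminorms and $\sigma_p$ is a norm is exactly the (implicit) hypothesis check the paper relies on.
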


We continue by considering robust PCA as presented in \cite{candesrpca}. Suppose that $\U$ is some collection of linear maps $\D:\R^{m\times n}\to\R^{m\times n}$ and $\|\cdot\|$ is some norm so that for any $\Y,\X\in\R^{m\times n}$
$$\max_{\D\in\U} \|\Y-\X-\D(\X)\| = \|\Y-\X\|_{F_1}+\lambda \|\X\|_{\sigma_1}.$$
It is easy to see that this implies $\|\cdot\|=\|\cdot\|_{F_1}$. These observations, combined with Theorem \ref{thm:genMatrix}, imply the following:
\begin{proposition}\label{prop:robPCAnotUncertainPCA}
The problem \eqref{eqn:RobPCA} can be written as an uncertain version of $\min_{\X} \|\Y-\X\|$ subject to additive, linear uncertainty in $\X$ if and only if $\|\cdot\|$ is the $1$-Frobenius norm $F_1$. In particular, \eqref{eqn:RobPCA} does not arise as uncertain versions of PCA (using $F_2$ or $\sigma_\infty$) under such a model of uncertainty.
\end{proposition}

This result is not entirely surprising. This is because robust PCA attempts to solve, based on its model of $\Y=\X+\E$ where $\X$ is low-rank and $\E$ is sparse, a problem of the form
$$\min_{\X} \|\Y-\X\|_{F_0}+ \lambda \rank (\X) ,$$
where $\|\mb A\|_{F_0}$ is the number of nonzero entries of $\mb A$. In the usual way, $F_0$ and $\rank$ are replaced with surrogates $F_1$ and $\sigma_1$, respectively. Hence, \eqref{eqn:RobPCA} appears as a convex, regularized form of the problem
$$\begin{array}{ll}
\ds\min_{\X}&\|\Y-\X\|_{F_1}\\
\st&\rank(\X)\leq k.
\end{array}$$

Again, as with matrix completion, it is possible to show that \eqref{eqn:RobPCA} and uncertain forms of PCA with a nuclear norm penalty (as appearing in Corollary \ref{cor:PCA}) can be derived using the true choice of penalizer, rank, instead of imposing an \emph{a priori} assumption of a nuclear norm penalty. We summarize this, without proof, as follows:

\begin{proposition}\label{prop:pcaRankGivesNuclearNorm}
For any $p\in[1,\infty]$ and any norm $\|\cdot\|$,
$$\min_{\X\in\Gamma} \max_{\D\in\U_{\Gamma(\rank,\|\cdot\|)}}\|\Y-\X-\D(\X)\| = \min_{\X\in\Gamma} \|\Y-\X\| + \lambda \|\X\|_{\sigma_1},$$
where $\Gamma = \{\X\in\R^{m\times n}:\|\X\|_{\sigma_p}\leq1\}$ and
$$\U_{\Gamma\ind(\rank,\|\cdot\|)} = \left\{ \D\text{ linear}: \ds\max_{\X \in \Gamma} \frac{\|\D(\X)\|}{\rank(\X)}\leq \lambda\right\}.$$
\end{proposition}

\subsection{Non-equivalence of robustification and regularization}

As with vector regression it is not always the case that robustification is equivalent to regularization in matrix estimation problems. For completeness we provide analogues here of the linear regression results. We begin by stating results which follow over with essentially identical proofs from the vector case; proofs are not included here. Then we characterize precisely when another plausible model of uncertainty leads to equivalence.

We begin with the analogue of Proposition \ref{prop:alwaysUpperBound}.

\begin{proposition}\label{prop:alwaysUpperBoundMAT}
Let $\U\sub\{\text{linear maps }\D:\R^{m\times n}\to\R^{m\times n}\}$ be any non-empty, compact set and $g:\R^{m\times n}\to\R$ a seminorm. Then there exists some seminorm $\oh:\R^{m\times n}\to\R$ so that for any $\z,\X\in\R^{m\times n}$,
$$\max_{\D\in\U} g(\z+\D(\X)) \leq g(\z) + \oh(\X),$$
with equality when $\z=\mb0$.
\end{proposition}

As before with Theorem \ref{thm:discrepLbUb} and Propositions \ref{prop:nonequivRLSuniversal} and \ref{prop:rowWiseUncFullCharac}, one can now compute $\oh$ for a variety of problems.

\begin{proposition}\label{prop:discrepLbUbMAT}
For any $\z,\X\in\R^{m\times n}$,
\begin{align}
\|\z\|_{F_p} + \frac{\lambda}{\delta_{mn}(q,p)}\|\X\|_{F_{q^*}}&\leq \max_{\D\in\U_{F_q}} \|\z+\D(\X)\|_{F_p} \label{eqn:discMat}\\
&\leq \|\z\|_{F_p} + \lambda\delta_{mn}(p,q)\|\X\|_{F_{q^*}}\label{eqn:discMat_2}
\end{align}
where $\|\D\|_{F_q}$ is interpreted as the $F_q$ norm on the matrix representation of $\D$ in the standard basis. In particular, if $p\neq q$ and $p\in(1,\infty)$, then for any $\X\neq\mb0$ the upper bound in \eqref{eqn:discMat_2} is strict  for almost all $\z$ (so long as $mn\geq2$). Further, when $p\neq q$ and $p\in(1,\infty)$, the gap in the lower bound in \eqref{eqn:discMat} is arbitrarily small for all $\X$.
\end{proposition}

\begin{proposition}\label{prop:nonequivRLSuniversalMAT}
For any $\z,\X\in\R^{m\times n}$,
\begin{align}
\|\z\|_p + \frac{\lambda}{\delta_{mn}(2,p)}\|\X\|_{F_2}&\leq \max_{\D\in\U_{\sigma_q}} \|\z+\D(\X)\|_{F_p} \label{eqn:discMat2}\\
&\leq \|\z\|_{F_p} + \lambda\delta_{mn}(p,2)\|\X\|_{F_2}\label{eqn:discMat2_2}.
\end{align}
In particular, if $p\notin\{1, 2,\infty\}$, then for all $\X\neq\mb0$ the upper bound in \eqref{eqn:discMat2_2} is strict for almost all $\z$ (so long as $mn\geq2$). Further, if $p\notin\{1,2,\infty\}$, the gap in the lower bound in \eqref{eqn:discMat2} is arbitrarily small for all $\X$.
\end{proposition}

We now turn our attention to non-equivalencies which may arise under different models of uncertainty instead of the general matrix model of linear uncertainty which we have included here, where
$$[\D(\X)]_{ij} = \sum_{\ell k} \Delta_{\ell k}^{(ij)} X_{\ell k} = \langle \D^{(ij)},\X\rangle,$$
with $\D^{(ij)}\in\R^{m\times n}$. Another plausible model of uncertainty is one for which the $j$th column of $\D(\X)$ only depends on $\X_j$, the $j$th column of $\X$ (or, for example, with columns replaced by rows). We now examine such a model. In this setup, we now have $n$ matrices $\D^{(j)}\in\R^{m\times m}$ and we define the linear map $\D$ so that the $j$th column of $\D(\X)\in\R^{m\times n}$, denoted $[\D(\X)]_j$, is $[\D(\X)]_j := \D^{(j)}\X_j,$ which is simply matrix vector multiplication. Therefore,
\begin{equation}\label{eqn:newUncModel}
\D(\X) = \begin{bmatrix}
\D^{(1)}\X_1 &\cdots & \D^{(n)}\X_n
\end{bmatrix}.
\end{equation}

For an example of where such a model of uncertainty may arise, we consider matrix completion in the context of the Netflix problem. If one treats $\X_j$ as user $j$'s true ratings, then such a model addresses uncertainty within a given user's ratings, while not allowing uncertainty to have cross-user effects. This model of uncertainty does not rely on true matrix structure and therefore reduces to earlier results on non-equivalence in vector regression. As an example of such a reduction, we state the following proposition characterizing equivalence. Again, this is a direct modification of Theorem \ref{thm:discrepLbUb} and the proof we do not include here. 

\begin{proposition}
For the model of uncertainty in \eqref{eqn:newUncModel} with $\D^{(j)}\in\U_{F_{q_j}}$ for $j=1,\ldots,n$, where $q_j\in[1,\infty]$, one has for the problem $\ds\min_\X\max_{\D\in\U} \|\Y-\X-\D(\X)\|_{F_p}$ that $\oh$ is defined as
\begin{equation}\label{eqn:newUncModelUB}
\oh(\X) = \lambda \left(\sum_j \delta_m^p(p,q_j)\|\X_j\|_{q_j^*}^p\right)^{1/p}.
\end{equation}
Further, under such a model of uncertainty, robustification is equivalent to regularization with $\oh$ if and only if $p\in\{1,\infty\}$ or $p=q_j$ for all $j=1,\ldots,n$.
\end{proposition}

While the case of matrix regression offers a large variety of possible models of uncertainty, we see again as with vector regression that this variety inevitably leads to scenarios in which robustification is no longer directly equivalent to regularization. We summarize the conclusions of this section in Table \ref{tab:matRegEquiv}.

\begin{table}[h]
\centering
\begin{tabular}{|c|c|c|c|}\hline
Loss function & Uncertainty set & $\oh(\X)$ & Equivalence if and only if\\\hline\hline
seminorm $g$& $\U_{\ind(h,g)}$ ($h$ norm) & $\lambda h(\X)$ & always\\\hline
$F_p$&$\U_{\sigma_q}$&$\lambda\delta_{mn}(p,2)\|\X\|_{F_2}$ &  $p\in\{1,2,\infty\}$\\\hline
{$F_p$}&{$\U_{F_q}$}&{$\lambda \delta_{mn}(p,q)\|\X\|_{F_{q^*}}$}& $p\in \{1,q,\infty\}$\\\hline
\multirow{2}{*}{$F_p$}&$\U$ in \eqref{eqn:newUncModel} & \multirow{2}{*}{\eqref{eqn:newUncModelUB}}&($p=q_j\;\forall j$) or  \\
& with $\D^{(j)}\in\U_{F_{q_j}}$ & & $p\in\{1,\infty\}$\\\hline
\end{tabular}
\caption{Summary of equivalencies for robustification with uncertainty set $\U$ and regularization with penalty $\oh$, where $\oh$ is as given in Proposition \ref{prop:alwaysUpperBoundMAT}. Here by equivalence we mean that for all $\z,\X\in\R^{m\times n}$, $\max_{\D\in\U} g(\z+\X) = g(\z)+\oh(\X)$, where $g$ is the loss function, i.e., the upper bound $\oh$ is also a lower bound. Here $\delta_{mn}$ is as in Theorem \ref{thm:discrepLbUb}. Throughout $p,q\in[1,\infty]$ and $mn\geq2$.}
\label{tab:matRegEquiv}
\end{table}


\section{Conclusion}

In this work we have considered the robustification of a variety of problems from classical and modern statistical regression as subject to data uncertainty. We have taken care to emphasize that there is a fine line between this process of robustification and the usual process of regularization, and that the two are not always directly equivalent. While deepening this understanding we have also extended this connection to new domains, such as in matrix completion and PCA. In doing so, we have shown that the usual regularization approaches to modern statistical regression do not always coincide with an adversarial approach motivated by robust optimization.

\bibliographystyle{amsplain}
\bibliography{References}

\newpage

\section*{Appendix A} 

This appendix contains proofs and additional technical results for the vector regression setting. We prove our results in the vector setting, from which the primary results on matrices follow as a direct corollary. 

\begin{proof}[Proof of Theorem \ref{thm:discrepLbUb}]

\begin{enumerate}[(a)]

\item 

We begin by proving the upper bound. Here we proceed by showing that $\oh$ above is precisely $\oh(\bb) = \lambda\delta_m(p,q)\|\bb\|_{q^*}$. Now observe that for any $\D\in\U_{F_q}$,
\begin{equation}\label{eqn:midProof}
\|\D\bb\|_p \leq \delta_m(p,q) \|\D\bb\|_q \leq \delta_m(p,q) \|\D\|_{F_q} \|\bb\|_{q^*} \leq \delta_m(p,q)\lambda \|\bb\|_{q^*}.
\end{equation}
The first inequality follows by the definition of the discrepancy function $\delta_m$. The second inequality follows from a well-known matrix inequality: $\|\D\bb\|_{q}\leq \|\D\|_{F_q}\|\bb\|_{q^*}$ (this follows from a simple application of H\"older's inequality). Now observe that in the chain of inequalities in \eqref{eqn:midProof}, if one takes any $\uu\in\argmax \delta_m(p,q)$ and any $\vv\in\argmax_{\|\vv\|_q=1} \vv'\bb$, then $\hd:=\lambda\uu\vv'\in\U_{F_q}$ and $\|\hd\bb\|_p = \delta_m(p,q)\lambda \|\bb\|_{q^*}$. Hence, $\oh (\bb) = \delta_m(p,q)\lambda \|\bb\|_{q^*}$. This proves the upper bound.

\item We now prove that for $p\in\{1,\infty\}$ that one has equality for all $(\zz,\bb)\in\R^{m}\times\R^n$. This follows an argument similar to that needed for Theorem \ref{thm:genMatrix}. First consider the case when $p=1$. Fix $\zz\in\R^m$. Again let $\uu\in\argmax\delta_m(1,q)$ and $\vv\in\argmax_{\|\vv\|_q=1}\vv'\bb$. Without loss of generality we may assume that $\sign(z_i) = \sign(u_i)$ for $i=1,\ldots,m$ (one may change the sign of entries of $\uu$ and it is still in $\argmax \delta_m(1,q)$). Then again we have $\hd:=\lambda \uu\vv'\in\U_{F_q}$ and 
\begin{align*}
\|\zz+\hd\bb\|_1 &= \|\zz+ \lambda \uu\vv'\bb\|_1 = \|\zz+\lambda \|\bb\|_{q^*}\uu\|_1\\
& = \|\zz\|_1 + \lambda\|\bb\|_{q^*}\|\uu\|_1 = \|\zz\|_1 + \lambda\|\bb\|_{q^*}\delta_m(1,q).
\end{align*}
Hence, one has equality in the upper bound for $p=1$, as claimed.

We now turn our attention to the case $p=\infty$. Note that $\delta_m(\infty,q) = 1$ because $\|\zz\|_\infty\leq \|\zz\|_q$ for all $\zz\in\R^m$. Fix $\zz\in\R^m$, and again let $\vv\in\argmax_{\|\vv\|_q=1}\vv'\bb$. Let $\ell\in\{1,\ldots,m\}$ so that $|z_\ell| = \|\zz\|_\infty$. Define $\uu=\sign(z_\ell)\mb e_\ell\in\R^m$, where $\mb e_\ell$ is the vector whose only nonzero entry is a $1$ in the $\ell$th position. Now observe that $\hd:=\lambda \uu\vv'\in\U_{F_q}$ and
\begin{align*}
\|\zz+\hd\bb\|_\infty &= \|\zz+ \sign(z_\ell)\lambda \|\bb\|_{q^*}\mb e_\ell\|_\infty\\
& = \|\zz\|_\infty + \lambda\|\bb\|_{q^*} \|\mb e_\ell\|_\infty = \|\zz\|_\infty + \lambda\|\bb\|_{q^*},
\end{align*}
which proves equality in \eqref{eqn:discUB}, as was to be shown.

\item To proceed, we examine the case where $p\in(1,\infty)$ and consider for which $(\zz,\bb)$ the inequality in \eqref{eqn:discUB} is strict. Fix $\bb\neq\mb0$. For $p\in(1,\infty)$ and $\y,\zz\in\R^m$, one has by Minkowski's inequality that $\|\y+\zz\|_p = \|\y\|_p+\|\zz\|_p$ if and only if one of $\y$ or $\zz$ is a non-negative scalar multiple of the other. To have equality in \eqref{eqn:discUB}, it must be that there exists some $\D\in\argmax_{\D\in\U_{F_q}} \|\D\bb\|_p$ for which $\|\zz+\D\bb\|_p = \|\zz\|_p + \|\D\bb\|_p$. For any $\zz\neq\mb0$ this observation, combined with Minkowski's inequality, implies that
$$\|\D\|_{F_q} = \lambda,\;\D\bb = \mu\zz\;\text{ for some }\mu\geq0,\;\text{ and } \|\D\bb\|_p = \lambda\delta_m(p,q)\|\bb\|_{q^*}.$$
The first and last equalities imply that $\D\bb\in\lambda\|\bb\|_{q^*}\argmax\delta_m(p,q)$. Note that $\argmax\delta_m(p,q)$ is finite whenever $p\neq q$ and $m\geq2$, a geometric property of $\ell_p$ balls. Hence, taking any $\zz$ which is not a scalar multiple of a point in $\argmax\delta_m(p,q)$  implies by Minkowski's inequality that
$$\max_{\D\in\U_{F_q}} \|\zz+\D\bb\|_p < \|\zz\|_p+\lambda \delta_m(p,q)\|\mb \bb\|_{q^*}.$$
Hence, for any $\bb\neq\mb0$, the inequality in \eqref{eqn:discUB} is strict for all $\zz$ not in a finite union of one-dimensional subspaces, so long as $p\in(1,\infty)$, $p\neq q$, and $m\geq 2$.

\item We now prove the lower bound in \eqref{eqn:discLB}. If $\zz=\mb0$ then there is nothing to show, and therefore we assume $\zz\neq\mb0$. Let $\vv\in\R^n$ so that
$$\vv\in\argmax_{\|\vv\|_q=1} \vv'\bb.$$
Hence $\vv'\bb=\|\bb\|_{q^*}$ by the definition of the dual norm. Define $\hd = \frac{\lambda }{\|\zz\|_q} \zz\vv'$. Observe that $\hd\in\U_{F_q}$. Further, note that $\|\zz\|_q \leq \delta_m(q,p)\|\zz\|_p$ by definition of $\delta_m$ and therefore $1/\delta_m(q,p)\leq \|\zz\|_p/\|\zz\|_q.$ Putting things together,
\begin{align*}
\|\zz\|_p + \frac{\lambda\|\bb\|_{q^*}}{\delta_m(q,p)} &\leq \|\zz\|_p + \frac{\lambda \|\zz\|_p\|\bb\|_{q^*}}{\|\zz\|_q} \\
&= \|\zz\|_p \left(1+ \frac{\lambda \|\bb\|_{q^*}}{\|\zz\|_q}\right) \\
&= \|\zz+\hd \bb\|_p \\
&\leq \max_{\D\in\U_{F_q}} \|\zz+\D\bb\|_p.
\end{align*}
This completes the proof of the lower bound.

\item To conclude we prove that the gap in \eqref{eqn:discLB} can be made arbitrarily small for $p\in(1,\infty)$. We proceed in several steps. We first prove that for any $\zz\neq\mb 0$ that
\begin{equation}\label{eqn:subClaim1}\lim_{\alpha\to\infty} \left(\max_{\D\in\U_{F_q}} \|\alpha\zz+\D\bb\|_p - \|\alpha\zz\|_p \right) = \frac{\lambda\|\bb\|_{q^*}\|\zz^{p-1}\|_{q^*}}{\|\zz\|_p^{p-1}},
\end{equation}
where we use the shorthand $\zz^{p-1}$ to denote the vector in $\R^m$ whose $i$th entry is $|z_i|^{p-1}$. Observe that
$$\max_{\D\in\U_{F_q}} \|\alpha\zz+\D\bb\|_p = \max_{\|\uu\|_q \leq \lambda \|\bb\|_{q^*}}\|\alpha\zz+\uu\|_p.$$
It is easy to argue that we may assume without any loss of generality that $\uu\in\argmax_{\|\uu\|_q\leq\lambda\|\bb\|_{q^*}} \|\alpha\zz+\uu\|_p$ has $\sign(u_i) = \sign(\alpha z_i)$, where
$$\sign(a) = \left\{
\begin{array}{ll}
1, &a \geq0\\
-1,& a < 0.
\end{array}\right.$$
Therefore, we restrict our attention to $\zz\geq\mb0$, $\zz\neq\mb0$, and $\uu\geq\mb0$. For any $\uu$ such that $\|\uu\|_q\leq \lambda\|\bb\|_{q^*}$ and $\uu\geq \mb0$, note that
\begin{align*}
\lim_{\alpha\to\infty} \|\alpha\zz+\uu\|_p-\|\alpha\zz\|_p &=  \lim_{\alpha\to\infty} \frac{\|\zz+\uu/\alpha\|_p - \|\zz\|_p}{1/\alpha}\\
&= \lim_{\bar\alpha\to0^+} \frac{\|\zz+\bar\alpha\uu\|_p - \|\zz\|_p}{\bar\alpha}\\
&= \left.\frac{d}{d\bar\alpha}\right|_{\bar\alpha=0}\|\zz+\bar\alpha\uu\|_p\\
&= \frac{\uu'\zz^{p-1}}{\|\zz\|_p^{p-1}}.
\end{align*}
We can now proceed to finish the claim in \eqref{eqn:subClaim1} (still restricting attention to $\zz\geq \mb0$ without loss of generality). By the above arguments, for any $\uu\geq\mb0$ and any $\epsilon>0$ there exists some $\hat\alpha=\hat\alpha(\uu)>0$ sufficiently large so that for all $\alpha>\hat\alpha$,
$$\left|\|\alpha\zz+\uu\|_p - \|\alpha\zz\|_p - \frac{\uu'\zz^{p-1}}{\|\zz\|_p^{p-1}}\right|\leq\epsilon.$$
It remains to be shown that for any $\epsilon>0$ there exists some $\hat\alpha$ so that for all $\alpha>\hat \alpha$,
$$\left|\left(\max_{\|\uu\|_q\leq \lambda\|\bb\|_{q^*}}\|\alpha\zz+\uu\|_p-\|\alpha\zz\|_p\right) - \left(\max_{\|\uu\|_q\leq \lambda\|\bb\|_{q^*}} \frac{\uu'\zz^{p-1}}{\|\zz\|_p^{p-1}}\right)\right|\leq \epsilon.$$

We prove this as follows. Let $\epsilon>0$. Choose points $\{\uu_1,\ldots,\uu_M\}\sub\R^m$ with $\|\uu_j\|_q=\lambda\|\bb\|_{q^*}\;\forall j$ so that for any $\uu\in\R^m$ with $\|\uu\|_q = \lambda\|\bb\|_{q^*}$, there exists some $j$ so that $\|\uu-\uu_j\|_p\leq \epsilon/3$ (note that our choice of $\ell_p$ here is intentional). Now observe that for any $\alpha$,
\begin{align*}
\max_j\|\alpha\zz+\uu_j\|_p &\leq \max_{\|\uu\|_q\leq \lambda\|\bb\|_{q^*}} \|\alpha\zz+\uu\|_p\\
&\leq \max_j \left(\max_{\|\uu-\uu_j\|_p\leq \epsilon/3} \|\alpha\zz+\uu\|_p\right)\\
&=\max_j \left(\max_{\|\bar\uu\|_p\leq \epsilon/3} \|\alpha\zz+\uu_j+\bar\uu\|_p\right)\\
&\leq\max_j \left(\max_{\|\bar\uu\|_p\leq \epsilon/3} \|\alpha\zz+\uu_j\|_p+\|\bar\uu\|_p\right)\\
&= \epsilon/3+\max_j\|\alpha\zz+\uu_j\|_p.
\end{align*}
Similarly, one has for $\bzz = \zz^{p-1}/\|\zz\|_p^{p-1}$ that $\left|\max_j \uu_j'\bzz - \max_{\|\uu\|_q\leq \lambda \|\bb\|_{q^*}} \uu'\bzz\right|\leq\epsilon/3$. (This uses the fact that $\|\bzz\|_{p^*}=1$.)
Now for each $j$ choose $\hat\alpha_j$ so that for all $\alpha>\hat\alpha_j$,
$$\left|\|\alpha\zz+\uu_j\|_p - \|\alpha \zz\|_p - \uu_j'\bzz\right|\leq \epsilon/3.$$
Define $\hat\alpha = \max_j\hat\alpha_j$. Now observe that by combining the above two observations, one has for any $\alpha>\hat\alpha$ that
\begin{align*}
\left|\left(\max_{\|\uu\|_q\leq\lambda\|\bb\|_{q^*}}\right.\right. &\|\alpha\zz+\uu\|_p-\|\alpha\zz\|_p\bigg) - \left.\left(\max_{\|\uu\|_q\leq \lambda\|\bb\|_{q^*}} \uu'\bzz\right)\right|\leq \\
&\leq 2\epsilon/3 + \left|\left(\max_j\|\alpha\zz+\uu_j\|_p-\|\alpha\zz\|_p\right) - \left(\max_\ell \uu_\ell'\bzz\right)\right|\\
&\leq 2\epsilon/3+\max_j \left|\|\alpha\zz+\uu_j\|_p-\|\alpha\zz\|_p - \uu_j'\bzz\right|\\
&\leq 2\epsilon/3+\epsilon/3=\epsilon.
\end{align*}
Noting that $\max_{\|\uu\|_q\leq\lambda\|\bb\|_{q^*}}\uu'\bzz = \lambda\|\bb\|_{q^*}\|\bzz\|_{q^*}$ concludes the proof of \eqref{eqn:subClaim1}. We now claim that
\begin{equation}\label{eqn:subClaim2}
\min_{\zz}\frac{\|\zz^{p-1}\|_{q^*}}{\|\zz\|_p^{p-1}} = \frac{1}{\delta_m(q,p)}.
\end{equation}
First note that
\begin{equation}\label{eqn:subClaim3}
\min_{\zz}\frac{\|\zz^{p-1}\|_{q^*}}{\|\zz\|_p^{p-1}}= \min_{\tzz} \frac{\|\tzz\|_{q^*}}{\|\tzz\|_{p^*}}.
\end{equation}
We prove this as follows: given $\zz$, let $\tzz=\zz^{p-1}$. Then one can show that $\|\tzz\|_{p^*}/\|\zz\|_p^{p-1} = 1,$ and so $\|\tzz\|_{p^*}/\|\tzz\|_{q^*}=\|\zz\|_p^{p-1}/\|\zz^{p-1}\|_{q^*}.$ The converse is similar, proving \eqref{eqn:subClaim3}. Finally, note that
$$\min_{\tzz}\frac{\|\tzz\|_{q^*}}{\|\tzz\|_{p^*}} = \frac{1}{\delta_m(p^*,q^*)}$$
which follows from an elementary analysis using the definition of $\delta_m$. Combined with the observation that $\delta_m(p^*,q^*)=\delta_m(q,p)$, which follows by a simply duality argument (or by inspecting the formula), we have that \eqref{eqn:subClaim2} is proven. To finish the argument, pick any $\zz\in\ds\argmin_{\zz}\|\zz^{p-1}\|_{q^*}/\|\zz\|_p^{p-1}.$ Per \eqref{eqn:subClaim2}, $\|\zz^{p-1}\|_{q^*}/\|\zz\|_p^{p-1} = 1/\delta_m(q,p)$. Hence, now applying \eqref{eqn:subClaim1}, given any $\epsilon>0$, there exists some $\alpha>0$ large enough so that
$$\left|\left(\max_{\D\in\U_{F_q}} \|\alpha\zz+\D\bb\|_p \right) - \left(\|\alpha\zz\|_p + \frac{\lambda}{\delta_m(q,p)}\|\bb\|_{q^*}\right)\right|\leq\epsilon.$$
Therefore, the gap in the lower bound in \eqref{eqn:discLB} can be made arbitrarily small for any $\bb\in\R^n$. This concludes the proof.
\end{enumerate}
\end{proof}

\section*{Appendix B}

This appendix includes an example of choice of loss function and uncertainty set under which (a) regularization is not equivalent to robustification in general and (b) there exist problem instances for which the regularization path and robustification path have no intersection. The example we give is in the vector setting for simplicity, although the generalization to matrices is obvious.

In particular, let $m=m$ and $n=1$, \text{and} consider $\U = \U_{(1,2)}$ and loss function $\ell_2$, with $\y = \begin{pmatrix}
1\\1\\1
\end{pmatrix}$ and $\X = \begin{pmatrix}
2\\1\\0
\end{pmatrix}$. In symbols, the problem of interest is
$$\min_{\beta} \max_{\D\in\U_{(1,1)}} \|\y-(\X+\D)\beta\|_2,$$
where we simply use $\beta$ because $\bb$ is $1$-dimensional. This can be rewritten exactly as
$$\min_\beta \max_{\substack{\uu:\\\|\uu\|_1 \leq \lambda |\beta|}} \left\|\begin{pmatrix}
1\\1\\1
\end{pmatrix} - \begin{pmatrix}
2\\1\\0
\end{pmatrix}\beta + \uu \right\|_2.$$
One can argue in an elementary way that the (unique) solution to this problem, regardless of $\lambda\in(0,\infty)$, is $\beta^*=3/5$. 

We now turn our attention to the corresponding regularization problem, which as per Corollary \ref{prop:rowWiseUncFullCharac}, is
$$\min_\beta \|\y-\X \beta\|_2 + \rho |\beta|.$$
By using a calculus argument, it is possible to show that as $\rho\in(0,\infty)$ varies, the set of all solutions $\beta^*$ is the open interval $(0,3/5)$. In other words, the solution to the regularization problem is never a solution to the robustification, and vice versa. This completes the counterexample.

\end{document}